\newtheorem{theorem}{Theorem}[section]
\newtheorem{lemma}[theorem]{Lemma}
\newtheorem{remark}[theorem]{Remark}
\numberwithin{equation}{section}
\providecommand{\keywords}[1]{\textbf{\textit{Index terms---}} #1}
\begin{document}

\title{Conformal Ricci flow on asymptotically hyperbolic  manifolds}

\author{Peng Lu, Jie Qing, and Yu Zheng}
\date{}

\begin{abstract}
In this article we study the short-time existence of conformal Ricci flow on asymptotically 
hyperbolic manifolds.
We also prove a local Shi's type curvature derivative estimate for conformal Ricci flow.  
\end{abstract}

\keywords{Conformal Ricci flow, Asymptotically hyperbolic manifolds, 
short time existence,  local Shi's curvature derivative estimates}

 \subjclass[2000]{Primary 53C25; Secondary 58J05}

\address{Peng Lu, Department of Mathematics, University of Oregon, Eugene, OR 97403, USA}
\email{penglu@uoregon.edu}
 
 \address{Jie Qing, Department of Mathematics, University of California at Santa Cruz,  
 Santa Cruz, CA 95064, USA}
 \email{qing@ucsc.edu}

\address{Yu Zheng, College of Mathematics, East China Normal University, Shanghai, P.R. China}
\email{zhyu@math.ecnu.edu.cn}

\thanks{P.L. is partially supported by Simons Foundation Collaboration Grant 229727,
J.Q. is partially supported by NSF DMS-1608782, and Y.Z. is partially supported by CNSF 11671141.}

\maketitle

\section{Introduction}  \label{sect introduction}

The geometry and analysis on asymptotically hyperbolic (AH in short) manifolds  
attracted significant research interest from both mathematics and theoretic physics 
communities, particularly after the introduction of AdS/CFT correspondence in theoretic 
physics (cf. \cite{Ma98, GKP, Wi98}). In this paper we prove the short time 
existence on AH manifolds and a local Shi's type curvature derivative
 estimates for conformal Ricci flow (CRF in short).

Ricci flow is known to be a powerful geometric and analytic tool in differential geometry 
and topology. CRF was introduced  by Fischer \cite{Fi04}
as the modified Ricci flow that maintains scalar curvature constant. It is so named because 
the constancy of scalar curvature is achieved by the conformal 
deformation of metrics at each time. Fischer \cite{Fi04} observed that, on compact 
manifolds, Yamabe constant is strictly increasing along CRF. Later, in \cite{LQZ},
the short-time existence of CRF on asymptotically flat 
manifolds was established. Interestingly, it is observed that ADM mass is strictly decreasing 
unless the initial metric is Ricci-flat \cite[Theorem 1.4]{LQZ} (in contrast to the fact that ADM 
mass stays constant along Ricci flow \cite{DM}). CRF is considered to possibly be an 
efficient way to search for Einstein metrics because of the nature of the Einstein-Hilbert
action (cf. \cite{Be87, Fi04}), that is, Einstein metrics on a manifold $M^n$ 
of dimension $n$ may be associated with
$$
\sup_{\{[g]: \text{conf classes}\}}\inf_{\{g\in[g]: \text{Riem metrics}\}}\frac{\int_{M}
 R_g d \mu_g }{\text{vol}(M, g)^\frac {n-2}n}.
$$

As introduced in \cite{Fi04} (see also \cite{LQZ}),  a family of metrics $\{g(t): t\in [0, T)\}$ 
on a smooth manifold $X^{m+1}$ is said to be CRF if it satisfies: 
\begin{equation} \label{eq crf g p orig general}
\left\{\aligned
& \partial_t g(t) + 2 \left ( \operatorname{Rc}_{g(t)} + m g(t) \right ) =  -2 p(t) g(t) \quad \text{in } 
X \times (0,T), \\
& (-\Delta_{g(t)} + (m+1))p(t)  = \frac 1m| \operatorname{Rc}_{g(t)} + m g(t) |_{g(t)}^2 
 \quad \text{on } X \times [0, T), \\
& g(0) = g_0,
\endaligned\right.
\end{equation}
where the initial metric $g_0$ has constant scalar curvature $- m(m+1)$, 
$p(t)$ is an auxiliary function and is named as the pressure function in \cite{Fi04} 
when comparing CRF with the Navier-Stokes equations, and $T$ is a positive constant. 
The scalar curvature $R_{g(t)}$  remains as $-m(m+1)$ for all $t\in [0, T)$. 
\\

Before stating the results, let us first briefly introduce AH manifolds. Let $\bar{X}$ be a compact smooth 
manifold with nonempty smooth boundary $\partial X$ and let $X$ be the interior. A smooth function 
$x: \bar{X} \rightarrow [0, \infty)$ is called a defining function for the boundary $\partial X$ if it satisfies:
$$
\text{1) $x > 0$ on $X$; \quad 2) $x = 0$ on $\partial X$; \quad 3) $dx \neq 0$ on $\partial X$}.
$$
A metric $g$ on $X$ is $C^{l+\beta}$ conformally compact if $x^2g$ 
extends to be a  $C^{l +\beta}$ metric on $\bar{X}$ for a boundary defining function $x$, 
where  $l \geq 2$ is an integer and $\beta \in(0,1)$.  $x^2g$ induces a metric $\hat g$ on the boundary and, 
in fact, $g$ induces a conformal structure $[\hat g]$ on the boundary when defining functions vary. 
$(X, g)$ is said to be AH if it is conformally compact and the sectional curvature of $g$ goes to $-1$ 
asymptotically at the infinity.

 It is often convenient  to use geodesic defining functions for AH manifolds. A geodesic defining 
 function $x$ is a defining function such  that  $|dx|_{x^2 g} = 1$ in a collar neighborhood of 
 the boundary not just on the boundary. Hence 
$$
g = x^{-2} (dx^2 + g_x)
$$
where $g_x$ is a family of metrics on $\partial X$ depending on $x$, i.e. the metric $g$ splits orthogonally 
in $x$ and the tangential to $\partial X$.

The central feature of AH manifolds is the association of the AH Riemannian 
metrics on $X$ to the conformal structures on its boundary $\partial X$, 
which is fundamental to a mathematical theory of AdS/CFT correspondence in theoretic physics. 
Similar to the explorations in \cite{Fi04, LQZ}, one expects 
that CRF is significant in the search for appropriately canonical AH metrics 
for a given conformal structure at the 
boundary to enrich the mathematical theory for AdS/CFT correspondence.  
\\

The first result of this article is the short time existence of CRF on AH manifolds (see \S 
\ref{subsec notat func space} for the description of H\"{o}lder spaces used).

\begin{theorem}\label{thm short exist CRF on AH}
Let $(X^{m+1}, g_0)$, $m\geq 3$, be a  $C^{4+ \alpha}$ AH manifold with constant scalar curvature 
$-m(m+1)$ and let $x$ be a geodesic defining function. Assume that 
$\operatorname{Rc}_{g_0} + m g_0\in x^2 C^{2+\alpha}_e(X)$. Then, for some $T > 0$,  
there is a family of metrics $g(t) = g_0 + u(\cdot, t)$ which solves CRF \eqref{eq crf g p orig general} 
such that $g(t)$ is $C^{1+\alpha}$ 
AH with constant scalar curvature $-m(m+1)$ 
and $u \in x^2C^{2+\alpha, \frac {2+\alpha}2}_e(X_T)$.
\end{theorem}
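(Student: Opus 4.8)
The plan is to reformulate the conformal Ricci flow as a parabolic problem for which one can invoke a contraction-mapping or implicit-function argument in the weighted Hölder spaces $x^2C^{2+\alpha,\frac{2+\alpha}{2}}_e(X_T)$. Writing $g(t)=g_0+u$ with $u$ in the asserted space, the first equation of \eqref{eq crf g p orig general} becomes an evolution equation $\partial_t u = -2(\operatorname{Rc}_{g_0+u}+m(g_0+u)) - 2p\,(g_0+u)$. The principal part of the Ricci operator, modulo diffeomorphism (DeTurck) gauge-fixing, is the rough Laplacian $\Delta_{g_0}$ acting on symmetric $2$-tensors, so after adding the usual DeTurck vector-field term one expects a strictly parabolic system whose linearization at $u=0$ is governed by $\partial_t u - \Delta_{g_0}u$ plus lower-order terms. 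The key structural point is that $g_0$ is AH with sectional curvature tending to $-1$, so $\Delta_{g_0}$ on tensors is a uniformly degenerate (edge/0-) operator, and the weight $x^2$ is precisely chosen so that $\operatorname{Rc}_{g_0+u}+m(g_0+u)$ and its evolution stay in $x^2C^{2+\alpha}_e$; this is why the hypothesis $\operatorname{Rc}_{g_0}+mg_0\in x^2C^{2+\alpha}_e(X)$ is imposed on the initial data.

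First I would set up the DeTurck trick to convert the weakly parabolic Ricci term into a genuinely parabolic one, producing a background-connection operator whose symbol is that of $\Delta_{g_0}$; one then recovers a solution of the original CRF after solving an auxiliary ODE for the diffeomorphisms and checking the gauge can be undone in the same weighted class. Next I would handle the pressure equation: for each fixed metric $g=g_0+u$ the elliptic equation $(-\Delta_g+(m+1))p=\frac1m|\operatorname{Rc}_g+mg|_g^2$ is solved by appealing to the invertibility of $-\Delta_g+(m+1)$ on AH manifolds (the indicial/spectral analysis gives a bounded inverse on the appropriate weighted spaces because the constant $m+1$ avoids the $L^2$-kernel thresholds), yielding a solution map $u\mapsto p(u)$ that is a smoothing operation: since the right-hand side is quadratic in $\operatorname{Rc}_g+mg\in x^2C^{2+\alpha}_e$, the source lies in $x^4C^{1+\alpha}_e$, so $p$ decays fast enough that $p\,g$ re-enters $x^2C^{2+\alpha}_e$ and the coupling term is subcritical for the weight. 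I would then feed $p(u)$ back into the evolution equation, so that the whole system is closed as a single quasilinear parabolic equation for $u$ with a nonlocal (but regularizing) lower-order term.

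The core analytic step is the linear parabolic theory on AH manifolds in the weighted edge-Hölder spaces: I would establish existence, uniqueness, and the Schauder-type estimate $\|u\|_{x^2C^{2+\alpha,\frac{2+\alpha}{2}}_e(X_T)}\le C\,\|F\|_{x^2C^{\alpha,\frac{\alpha}{2}}_e(X_T)}$ for the linearized operator $\partial_t-\Delta_{g_0}$ (plus the zeroth-order shift from the $mg$ and DeTurck terms) acting between these weighted spaces, with a constant $C$ independent of $T\le T_0$; this is the assertion one must borrow from or prove within the linear theory developed alongside this paper. With that estimate in hand, I would define the fixed-point map $u\mapsto \Phi(u)$ by solving the linear problem with right-hand side the collected nonlinear and pressure terms evaluated at $u$, verify that $\Phi$ maps a small ball in $x^2C^{2+\alpha,\frac{2+\alpha}{2}}_e(X_T)$ into itself and is a contraction for $T$ small, using that all nonlinearities are at least quadratic in $u$ and that the pressure map is Lipschitz with the gained decay. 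Finally I would check that the fixed point has constant scalar curvature $-m(m+1)$ for all $t$ — this follows from the structure of \eqref{eq crf g p orig general}, since the trace of the evolution equation combined with the pressure equation forces $\partial_t R_{g(t)}\equiv0$ once the parabolic equation for $R$ is verified to have the constant solution — and that $g(t)$ remains $C^{1+\alpha}$ conformally compact, which is automatic from $u\in x^2C^{2+\alpha,\frac{2+\alpha}{2}}_e$ since the weight $x^2$ guarantees $u$ does not disturb the leading conformally compact part of $g_0$.

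The main obstacle I expect is the linear parabolic Schauder estimate in the weighted edge spaces with a $T$-uniform constant: the uniformly degenerate (0-)structure of $\Delta_{g_0}$ near $\partial X$ means the standard interior parabolic Schauder theory does not apply directly, and one must control the boundary behavior via the indicial roots of the operator to ensure both the solvability and the sharp weight $x^2$ is preserved (neither improved nor lost). Getting the pressure coupling to respect this weight — confirming that the quadratic source genuinely lands two orders of decay better so that the nonlocal term is a strict perturbation — is the second delicate point, since a loss of decay there would break the self-map property of $\Phi$.
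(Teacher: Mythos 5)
Your skeleton is the same as the paper's (DeTurck gauge, inversion of $-\Delta_g+(m+1)$ on weighted spaces for the pressure, contraction mapping based on the parabolic Schauder theory of \cite{Ba11}), but there is a genuine gap at the core of your fixed-point step: it is \emph{not} true that ``all nonlinearities are at least quadratic in $u$.'' Decomposing the gauged flow as the paper does in \eqref{eq Ba11 decomp of evol op} and \eqref{eq P(h +u) decomp}, the right-hand side of the equation for $v=\Phi(u)$ contains, besides the quadratic pieces $\mathcal{Q}(u)$, $\hat{\mathcal{Q}}(u)$, the $u$-\emph{independent} inhomogeneous terms $\mathcal{E}=2(\operatorname{Rc}_{g_0}+mg_0)$ and $\hat{\mathcal{E}}=-2\mathcal{P}(g_0)g_0$ (nonzero unless $g_0$ is Einstein), and the term $\hat{\mathcal{M}}(u)=-2\left(\mathcal{P}(g_0+u)-\mathcal{P}(g_0)\right)g_0$, which is merely \emph{linear} in $u$. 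This wrecks your scheme in the space you chose: the norm of $x^2C^{2+\alpha,\frac{2+\alpha}{2}}_e(X_T)$ controls $\partial_t v$, and since $\mathcal{Q}(0)=\hat{\mathcal{Q}}(0)=\hat{\mathcal{M}}(0)=0$ one has $\partial_t\Phi(0)\big|_{t=0}=\mathcal{E}+\hat{\mathcal{E}}$, so $\|\Phi(0)\|$ is bounded below by a positive constant \emph{independent of $T$}. Hence $\Phi$ cannot map a small ball centered at $0$ into itself no matter how small $T$ is; moreover the Lipschitz constant coming from $\hat{\mathcal{M}}(u)-\hat{\mathcal{M}}(\tilde u)$ is $O(1)$ rather than $O(\epsilon)$, so taking the ball small does not by itself make $\Phi$ a contraction either.

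The missing idea is the paper's two-tier use of function spaces. The contraction is run in the \emph{weaker-in-time} space $x^2C^{2+\alpha,\frac{\alpha}{2}}_e(X_T)$ (no time derivative in the norm), and the non-quadratic pieces are made small in $T$ by a regularity/time-integration argument: because $g_0$ is $C^{4+\alpha}$ AH and $\operatorname{Rc}_{g_0}+mg_0\in x^2C^{2+\alpha}_e(X)$ --- this is exactly where those hypotheses are used --- the sources $\mathcal{E}+\hat{\mathcal{E}}$ and $\hat{\mathcal{M}}(u)$ lie in $x^2C^{2+\alpha,\frac{\alpha}{2}}_e$, so Lemma \ref{thm 3.2 in Ba11} applied at the level $k=2$ gives solutions $v_2,v_3\in x^2C^{4+\alpha,\cdot}_e$; writing $v_i(\cdot,t)=\int_0^t\bigl(\text{source}+\hat{L}v_i\bigr)\,ds$ then yields $\|v_i\|_{x^2C^{2+\alpha,\frac{\alpha}{2}}_e(X_T)}\leq CT^{1-\frac{\alpha}{2}}(\cdots)$, which is what makes both the self-map and the contraction estimates close (Lemmas \ref{lem Ba 4.5}, \ref{lem Ba 4.6}). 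The full regularity $u\in x^2C^{2+\alpha,\frac{2+\alpha}{2}}_e(X_T)$ asserted in the theorem is then recovered \emph{a posteriori} at the fixed point from the equation itself, as in Theorem \ref{DCRF-short-time}. Two smaller slips you should also correct: the gauged linearization is the Lichnerowicz operator $\Delta_L^{g_0}+2m$ (shifted by $-2\mathcal{P}(g_0)$), not the rough Laplacian (same principal symbol, different zeroth-order part, and the shift must be checked to preserve uniform degenerate ellipticity); and along the flow the pressure source $|\operatorname{Rc}_g+mg|^2$ lies only in $x^4C^{\alpha}_e$ (not $x^4C^{1+\alpha}_e$), since $\operatorname{Rc}_{g_0+u}$ costs two derivatives of $u$.
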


We would like to note that one can always conformally deform a given AH metric into an AH metric
with constant scalar curvature, say $-m(m+1)$, thanks to \cite{ACF}. 
Let $\owedge$ be the Kulkarni-Nomizu product.
The conditions that  Riemannian curvature 
$\operatorname{Rm} + g\owedge g = O(x^2)$, Ricci curvature $\operatorname{Rc}+ m g= O(x^2)$, 
and the boundary $\partial X$ is totally geodesic in $X$ under the metric $x^2 g$ for 
a geodesic defining function $x$, are all equivalent 
and preserved under the conformal deformations taken in \cite{ACF} (cf. Lemma \ref{calculation-AH-curvature}). 
And the property that $\operatorname{Rm} + g\owedge g = O(x^2)$ on an AH 
 manifold is intrinsic and independent of the choice of the geodesic defining functions.

Ricci flows on complete noncompact manifolds were studied by many people. The most notable early
work is \cite{Sh89} by Shi in 1989. Ricci flows on AH manifolds were also studied in \cite{QSW, Ba11},
where the existence in \cite{QSW} was based on the maximum principle argument and the 
existence result in \cite{Sh89}, while the existence in \cite{Ba11} is based on asymptotic analysis on
AH manifolds in \cite{Mz91, Le06, Al07}. In this paper, we prove Theorem \ref{thm short exist CRF on AH} 
using the framework similar to that in \cite{LQZ} 
for parabolic-elliptic systems  based on the analysis on AH manifolds from 
\cite{Mz91, Le06, Al07, Ba11}. 
\\

The second result is a local Shi's type estimate for CRF. In Ricci flow Shi's estimates
 on derivatives of Riemannian curvature is crucial for 
 compactness results, and therefore, are essential for the 
later developments in Ricci flow.

 For CRF on smooth manifold $M^{n}$ with initial metric $g_0$ of constant scalar curvature 
  $R_{g_0}=2nc$

\begin{equation} \label{eq crf g p local riem}
\left\{\aligned
& \partial_t g(t) = -2 \left ( \operatorname{Rc}_{g(t)} - 2c
 g(t) \right ) -2 p(t) g(t) \quad \text{on } M \times (0,T], \\
& ((n-1) \Delta_{g(t)} + 2nc ) p(t)  = - | \operatorname{Rc}_{g(t)}
-2cg(t) |_{g(t)}^2  \quad \text{on } M \times [0, T],
\endaligned\right.
\end{equation}
 we have

\begin{theorem}\label{thm local curvature derivative est}
Fix constants $\alpha  \in (0, 1),  K \geq 1, \tilde{K}>0, c,  r>0$, and integer $n \geq 2$, 
we have the following.

\vskip .1cm
\noindent $($i$)$ There exists a constant $C_1=C_1\left( \alpha, n, \sqrt{K}r,\tilde{K}, |c| \right) $
depending only on $\alpha, n$, $\sqrt{K}r, \tilde{K} $, and $|c|$,
such that the following property holds.
Let $(M^n, g(t), p(t)), \, t\in\left[
0,T \right] $, be a solution to the CRF (\ref{eq crf g p local riem}).
Assume that closed ball $\bar{B}_{g\left( 0\right)
}\left( O,r\right) \subset M $ is compact and that
\begin{align}
&\left\vert \operatorname{Rm}  \right\vert \leq K \quad \text{ on } \bar{B}_{g\left(
0\right) }\left( O,r\right) \times \lbrack0, T_*],
\label{eq curv bdd assump}  \\
& \max_{i=0,1,2,3} | \nabla^i p| \leq \tilde{K} \quad \text{ on } \bar{B}_{g
\left(  0\right) }\left( O,r\right) \times \lbrack 0, T_*],   \label{eq p deriv bdd assump}
\end{align}
where constant $ T_* \leq \min \{T, \alpha /K \}$,  then we have
\begin{equation}
\left\vert \nabla\operatorname*{Rm}\left(  x,t\right)  \right\vert_{g(t)} \leq
\frac{C_1 K}{\sqrt{t}} \label{eq est local first order Shi}
\end{equation}
for all $\left( x,t\right)  \in B_{g\left(  0\right)  }\left(
O,r/2\right)  \times(0,T_*]$.

\vskip .1cm
\noindent $($ii$)$ If  $(M^n, g(t),p(t)), \, t \in [0,T]$, in (i) is a complete solution to the CRF. 
Suppose assumptions
(\ref{eq curv bdd assump}) and (\ref{eq p deriv bdd assump}) hold on $M \times [0,T_*]$,
then there is a constant $C_2= C_2(\alpha, n,\tilde{K}, |c|)$ such that
\begin{equation}
\left\vert \nabla\operatorname*{Rm}\left( x,t\right)  \right\vert_{g(t)} \leq
\frac{C_2 K}{\sqrt{t}} \label{eq est first order Shi}
\end{equation}
for all $\left( x,t\right)  \in M \times(0,T_*]$.
\end{theorem}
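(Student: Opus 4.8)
The plan is to adapt Shi's maximum-principle argument for the local derivative estimate to the conformal Ricci flow, treating the conformal/pressure term as a controlled perturbation of Ricci flow. Writing the flow as $\partial_t g = -2\operatorname{Rc} + (4c-2p)\,g$ and setting $v := 4c-2p$, I would first record how the extra conformal rate $v g$ modifies the evolution of the curvature tensor. Since $\partial_t\Gamma$ acquires a term linear in $\nabla v$, the tensor $\partial_t\operatorname{Rm}$ acquires terms involving $\nabla^2 v$ and $v*\operatorname{Rm}$, so schematically
$$\partial_t\operatorname{Rm} = \Delta\operatorname{Rm} + \operatorname{Rm}*\operatorname{Rm} + v*\operatorname{Rm} + \nabla^2 v * g,$$
and differentiating once more produces terms up to $\nabla^3 v$. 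This is exactly why the hypothesis controls $\nabla^i p$ for $i=0,1,2,3$: under \eqref{eq p deriv bdd assump} and with $|c|$ fixed, $v$ and all these derivative corrections are bounded by $C(n,\tilde K,|c|)$ times the corresponding power of $|\operatorname{Rm}|$ or $|\nabla\operatorname{Rm}|$, hence remain strictly lower order.

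Next I would derive the two reaction–diffusion inequalities that drive the argument. On the region where $|\operatorname{Rm}|\le K$ and $\max_{i\le 3}|\nabla^i p|\le\tilde K$, accounting also for the time derivatives of $g^{-1}$, I expect
$$\partial_t|\operatorname{Rm}|^2 \le \Delta|\operatorname{Rm}|^2 - 2|\nabla\operatorname{Rm}|^2 + C|\operatorname{Rm}|^3 + C\big(|\operatorname{Rm}|^2 + |\operatorname{Rm}|\big)$$
and
$$\partial_t|\nabla\operatorname{Rm}|^2 \le \Delta|\nabla\operatorname{Rm}|^2 - 2|\nabla^2\operatorname{Rm}|^2 + C|\operatorname{Rm}|\,|\nabla\operatorname{Rm}|^2 + C\big(|\nabla\operatorname{Rm}|^2 + |\nabla\operatorname{Rm}|\big),$$
where $C=C(n,\tilde K,|c|)$ collects the conformal and pressure contributions. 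As in Ricci flow, the essential structural feature is the good negative term $-2|\nabla\operatorname{Rm}|^2$ in the first inequality and $-2|\nabla^2\operatorname{Rm}|^2$ in the second, which will be traded against the bad reaction terms; the pressure corrections, being at most quadratic in $|\nabla\operatorname{Rm}|$, never interfere with these.

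I would then run the classical Shi test function. Setting $u:=|\nabla\operatorname{Rm}|^2$, $w:=|\operatorname{Rm}|^2$, and choosing a constant $A\sim K^2$ so that $A\le A+w\le 2A$ on $\bar B_{g(0)}(O,r)$, I would study $F:= t\,\varphi^2\,(A+w)\,u$, where $\varphi$ is a standard spatial cutoff equal to $1$ on $B_{g(0)}(O,r/2)$, supported in $B_{g(0)}(O,r)$, with $|\nabla\varphi|^2\le C\varphi/r^2$ and $\Delta_{g(t)}\varphi\ge -C/r^2$ along the flow; here $|\operatorname{Rm}|\le K$ together with $T_*\le\alpha/K$ controls metric distortion and the Hessian of the distance, which is how the dependence on $\sqrt K r$ enters. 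Computing $(\partial_t-\Delta)F$ at a positive interior space-time maximum (where $\nabla F=0$ lets one solve for $\nabla u$ in terms of $\nabla\varphi$ and $\nabla w$), the term $u\,(\partial_t-\Delta)w$ contributes $-2u^2$, which is quadratic in $u$ and thus dominates every term linear in $u$ — the reaction $C\sqrt w\,(A+w)u$, the explicit $F/t=(A+w)u$, the cross term $-2\langle\nabla w,\nabla u\rangle$ absorbed via Cauchy–Schwarz into $(A+w)|\nabla^2\operatorname{Rm}|^2$, and the cutoff terms — once $u$ exceeds a threshold of order $K^3$. This caps $u$ at the maximum, yielding $\max F\le C(\alpha,n,\sqrt K r,\tilde K,|c|)\,K^4$; restricting to $\{\varphi=1\}$ gives $t\,u\le C K^2$, i.e. \eqref{eq est local first order Shi}.

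For part (ii), with a complete solution and the hypotheses holding on all of $M\times[0,T_*]$, I would apply (i) on balls $B_{g(0)}(O,r)$ and let $r\to\infty$: completeness makes the closed balls compact so (i) applies for every $r$, and the only $r$-dependence in $C_1$ enters through cutoff terms of size $\sim 1/(\sqrt K r)^2$, which vanish in the limit, so $C_1(\alpha,n,\sqrt K r,\tilde K,|c|)$ converges to a finite $C_2(\alpha,n,\tilde K,|c|)$, giving \eqref{eq est first order Shi}. The main obstacle I anticipate is the bookkeeping of the conformal and pressure corrections cleanly enough that they remain genuinely lower order and are absorbed by the good terms $-2|\nabla\operatorname{Rm}|^2$ and $-2|\nabla^2\operatorname{Rm}|^2$; in particular one must verify that the highest pressure derivatives $\nabla^2 p$ and $\nabla^3 p$ do not disturb the decisive $-u^2$ and $-(A+w)|\nabla^2\operatorname{Rm}|^2$ terms, which is precisely the reason the hypothesis controls $p$ up to third order rather than $p$ alone.
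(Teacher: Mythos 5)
Your overall strategy coincides with the paper's: derive the evolution inequalities for $|\operatorname{Rm}|^2$ and $|\nabla\operatorname{Rm}|^2$ treating the pressure terms as lower order (which is exactly why $\nabla^i p$, $i\le 3$, is assumed bounded), form the Bernstein quantity $(A+|\operatorname{Rm}|^2)|\nabla\operatorname{Rm}|^2$ with $A\sim K^2$ (the paper takes $A=16K^2$), localize by a cutoff times $t$, and apply the parabolic maximum principle; this matches the paper's proof, which reduces to the Ricci flow argument of \cite[pp.~238--239]{CC2}. Part (ii) is also reasonable, although instead of letting $r\to\infty$ it is simpler to apply (i) at every point of $M$ with $\sqrt{K}\,r$ fixed, say equal to $1$, which immediately gives a constant independent of $r$.

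There is, however, one genuine gap: your cutoff function. You posit a time-independent $\varphi$ supported in $B_{g(0)}(O,r)$ with $|\nabla\varphi|_{g(t)}^2\le C\varphi/r^2$ \emph{and} $\Delta_{g(t)}\varphi\ge -C/r^2$ for all $t\in[0,T_*]$, asserting that $|\operatorname{Rm}|\le K$ and $T_*\le\alpha/K$ control ``metric distortion and the Hessian of the distance.'' The gradient bound does follow from the uniform equivalence of the metrics, but the Laplacian bound does not. Writing
\begin{equation*}
\Delta_{g(t)}\varphi=g(t)^{ij}\left(\partial_i\partial_j\varphi-\Gamma^k_{ij}(g(t))\,\partial_k\varphi\right),
\end{equation*}
one must compare $\Gamma(g(t))$ with $\Gamma(g(0))$, and since $\partial_t\Gamma\sim g^{-1}*\nabla(\partial_t g)\sim\nabla\operatorname{Rc}+\nabla p*g$, the comparison is controlled only up to $\int_0^t|\nabla\operatorname{Rc}|\,ds$ --- precisely the quantity the theorem is trying to estimate, and not a priori bounded by the curvature bound alone. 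This circularity is the whole point of the paper's Lemma \ref{lem cutoff func eta} (the CRF analogue of \cite[Lemma 14.3]{CC2}): the admissible cutoff satisfies only
\begin{equation*}
-\Delta_{g(t)}\eta\le\frac{C_2}{r^2}+\frac{C_1}{K^{3/2}r}\sup_{s\in[0,t]}\left(\eta\,\Theta\right)^{1/2},\qquad \Theta=tK^2|\nabla\operatorname{Rc}|^2,
\end{equation*}
i.e.\ the bad term is retained, expressed through the localized quantity itself, and absorbed self-consistently at the space-time maximum by the good $-|\nabla\operatorname{Rm}|^4$ term; moreover, in the CRF setting the Christoffel comparison also uses the hypothesis $|\nabla p|\le\tilde K$. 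Your argument goes through once you replace your cutoff by this lemma and verify that the extra $\sup(\eta\,\Theta)^{1/2}$ contribution is dominated at the maximum point; as written, the step ``choose $\varphi$ with $\Delta_{g(t)}\varphi\ge-C/r^2$'' fails.
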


Here is the outline of the rest of this article.
In \S \ref{sect preli map proper} we discuss basic analysis results on AH manifolds from 
\cite{Le06, Ba11}, which are needed to prove the short time existence. 
\S \ref{sect existence of CRF on AH mfld} is devoted to the proof of Theorem 
\ref{thm short exist CRF on AH} using Banach's contraction mapping theorem.
In \S \ref{sect Shi's derivative estimate under crf} we give a proof Theorem 
\ref{thm local curvature derivative est} using the parabolic maximum principle.
Also we will outline a proof of high order derivative estimate of the curvature tensor
of CRF (see Theorem \ref{thm shi high order estimate}).
\\

\noindent \textbf{Acknowledgement}. 
P.L. thanks the visiting program for scholars from abroad at Peking University and
Professor Zhu, Xiaohua for the warm hospitality and support. P.L. and J.Q. thanks Beijing International 
Center for Mathematical Research, where part of this work is carried out during the summer of 2017.

%%%%%%%%%%%%%%%%%%%%%%%%%%%%%%%%%%%%%%%%%%%%%%%%%%
%%%%%%%%%%%%%%%%%%%%%%%%%%%%%%%%%%%%%%%%%%%%%%%%%%
\section{Preliminaries for AH manifolds} \label{sect preli map proper}

In this section we recall some  basic analysis on AH manifolds. We mostly rely on 
\cite{Mz91, Le06, Al07, Ba11},
and readers are referred to them for detailed accounts.

%%%%%%%%%%%%%%%%%%%%%%%%%%%%%%%%%%%%%%%%
\subsection{Notations and function/tensor H\"{o}lder spaces}\label{subsec notat func space}
Let $({X}^{m+1}, g)$ be a  $C^{l+ \beta}$ AH manifold with $l \geq 2$ and $\beta\in (0, 1)$.  
Let $x$ be a geodesic defining function such that, in a collar neighborhood 
$\{ x < \varepsilon\}$ near the infinity,  we have $g = x^{-2}(dx^2 + g_x)$,
where $g_x$ is a family of metrics on $\partial X$. Let $X_T = X\times [0, T]$. 
We will consider the function/tensor H\"{o}lder spaces 
$$
x^\mu C^{k+\alpha}_e(X), \quad x^\mu C_e^{k+\alpha, \frac{k+\alpha}2}(X_T)
$$ 
as described in \cite[Section 3.1]{Ba11}, for $k + \alpha \leq l+\beta$, 
which is assumed throughout this paper.

We are concerned with the operators $-\Delta+(m+1)$ on functions and 
$\Delta _L + 2m$ on 2-tensors on $X$,  where the  Lichnerowicz operator $\Delta_L =\Delta_L^g $ 
acting on $2$-tensor $w$ is given by
\[
(\Delta_L^{g} w)_{ij} = g^{kl} \nabla_k \nabla_l  w_{ij} + 2 g^{kl}g^{pq} 
 R_{ikpj}w_{lq} - g^{kl} R_{ik}w_{lj} - g^{kl}R_{jk}w_{li}.
\] 
Throughout this article when we use local coordinates $(x^0, x^1, \cdots, x^m)$ 
near the boundary of $X$,  we choose  $x^0=x$ and $(x^1, \cdots, x^m)$ to be
some local coordinates on $\partial X$.
 Our convention for Riemann curvature tensor components $R_{ijkl}$ is such that 
 $g^{kl} R_{iklj} = R_{ij}$.
\\

Let $\operatorname{Rm}$ denote the curvature $(4,0)$-tensor and 
let $\operatorname{Rc}$ denote the Ricci curvature.
First by some straightforward calculation we get

\begin{lemma} \label{calculation-AH-curvature} 
Suppose that $(X^{m+1}, g)$ is $C^{l+\beta}$ 
AH and that $x$ is a geodesic defining function. 
Let $\bar g = x^2 g$. 
Then near the boundary of $X$ we may write Riemannian curvature tensor
components as
\begin{equation}\label{AH-curvature}
R_{ijkl}(g) =  - (g_{ik}g_{jl} - g_{il} g_{jk})  + x^{-3} T_{ijkl} + x^{-2} R_{ijkl}(\bar g)
\end{equation}
for some $(4,0)$-tensor $T$ defined in (\ref{AH-curvature tensor T}) below. 
It follows that $\operatorname{Rm} + g \owedge g \in x^2 C^{l-2+
\beta}_e(X)$, $\operatorname{Rc} + mg \in x^2 C^{l-2+\beta}_e(X)$,  and
the condition that boundary $\partial X$ is totally geodesic in $X$ under metric $\bar{g}$,
 are all equivalent.  Note that the last condition is independent of the choice of the 
geodesic defining function $x$.
\end{lemma}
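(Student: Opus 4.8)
The plan is to read off everything from the conformal transformation law for curvature applied to the relation $g = x^{-2}\bar{g} = e^{2\phi}\bar{g}$ with $\phi = -\log x$. First I would record the standard formula for the $(4,0)$ Riemann tensor under a conformal change: writing all quantities with an overbar for $\bar{g}$,
\begin{equation*}
R_{ijkl}(g) = e^{2\phi}\Big( R_{ijkl}(\bar{g}) - \big(\bar{g}\owedge E\big)_{ijkl}\Big), \qquad E = \bar{\nabla}^2\phi - d\phi\otimes d\phi + \tfrac12\,|d\phi|_{\bar{g}}^2\,\bar{g}.
\end{equation*}
Substituting $\phi = -\log x$ gives $d\phi = -x^{-1}dx$ and $\bar{\nabla}^2\phi = -x^{-1}\bar{\nabla}^2 x + x^{-2}\,dx\otimes dx$, so the two $dx\otimes dx$ contributions cancel and $E = -x^{-1}\bar{\nabla}^2 x + \tfrac12|dx|_{\bar{g}}^2\,x^{-2}\bar{g}$. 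This cancellation is exactly where the geodesic defining function is used: $|dx|_{\bar{g}}^2 \equiv 1$ in the collar, so the last term is the pure power $\tfrac12 x^{-2}\bar{g}$.

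Feeding $E$ back into the formula produces three pieces. The $\tfrac12 x^{-2}\bar{g}$ part of $E$, after multiplication by $e^{2\phi} = x^{-2}$ and using $g = x^{-2}\bar{g}$, becomes the constant-curvature background $-(g_{ik}g_{jl} - g_{il}g_{jk})$; the term $e^{2\phi}R_{ijkl}(\bar{g})$ gives $x^{-2}R_{ijkl}(\bar{g})$; and the term coming from $-x^{-1}\bar{\nabla}^2 x$ gives $x^{-3}$ times a $(4,0)$-tensor which, written out with the correct symmetries, is the tensor $T$ of \eqref{AH-curvature tensor T}. This yields \eqref{AH-curvature}. The one structural fact I would emphasize is that $T$ is built solely from $\bar{\nabla}^2 x$, and under the splitting $\bar{g} = dx^2 + g_x$ the Hessian $\bar{\nabla}^2 x$ reduces on the tangential block to $\tfrac12\partial_x g_x$, i.e.\ to (a multiple of) the second fundamental form of the level sets $\{x = \text{const}\}$. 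Hence $T|_{x=0} = 0$ if and only if the second fundamental form of $\partial X$ in $(\bar{X},\bar{g})$ vanishes, which is the totally geodesic condition.

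To extract the weighted Hölder statement I would measure each correction in the intrinsic $g$-norm, recalling that a $(4,0)$-tensor $S$ satisfies $|S|_g = x^4|S|_{\bar{g}}$. Since $\bar{g}\in C^{l+\beta}$ forces $R(\bar{g})\in C^{l-2+\beta}_e(\bar{X})$, the piece $x^{-2}R(\bar{g})$ has $g$-norm $x^2|R(\bar{g})|_{\bar{g}} = O(x^2)$ automatically, while the piece $x^{-3}T$ has $g$-norm $x\,|T|_{\bar{g}}$, which is $O(x^2)$ precisely when $T = O(x)$, i.e.\ precisely when $\partial X$ is totally geodesic. Thus $\operatorname{Rm} + g\owedge g \in x^2 C^{l-2+\beta}_e(X)$ is equivalent to the totally geodesic condition, the regularity exponent $l-2+\beta$ being dictated by $R(\bar{g})$. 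Contracting \eqref{AH-curvature} with $g^{kl}$ turns the background into the Einstein term and the two corrections into their traces, so $\operatorname{Rc} + mg$ carries exactly the same leading behaviour and lies in $x^2 C^{l-2+\beta}_e(X)$ under, and only under, the same condition; this closes the three-way equivalence.

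Finally, independence of the chosen geodesic defining function follows because any two such functions agree to leading order at $\partial X$ (their ratio extends to a positive function on $\bar{X}$), so the decay rate of the intrinsic tensor $\operatorname{Rm}+g\owedge g$, measured in the $g$-norm, does not see the choice; equivalently, one can argue that the vanishing of the second fundamental form transforms consistently under the allowed changes $x \mapsto x\,e^{\omega}$. The main obstacle I anticipate is not conceptual but bookkeeping: carrying out the conformal expansion without sign or factor errors, collecting every $\bar{\nabla}^2 x$ contribution into the single symmetric tensor $T$ of \eqref{AH-curvature tensor T}, and correctly matching the powers of $x$ between the $\bar{g}$- and $g$-norms so that the advertised $x^2$ weight—rather than $x$—emerges exactly under the totally geodesic hypothesis.
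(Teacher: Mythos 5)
Your proposal is correct and reaches all three equivalences, but it derives the key decomposition \eqref{AH-curvature}--\eqref{AH-curvature tensor T} by a genuinely different route. The paper obtains $T$ from an unexplained direct (``long but simple'') computation of the curvature of $g=x^{-2}(dx^2+g_x)$ in collar coordinates, and then encodes the totally geodesic condition as $\partial_x\bar g_{ab}|_{x=0}=0$; you instead invoke the standard conformal transformation law for the $(4,0)$ curvature tensor with $g=e^{2\phi}\bar g$, $\phi=-\log x$, use $|dx|_{\bar g}\equiv 1$ in the collar to cancel the $d\phi\otimes d\phi$ terms, and identify $T=\bar g\owedge\bar\nabla^2 x$. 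Combined with the collar identity $\bar\nabla^2 x=\tfrac12\partial_x g_x$ (with vanishing normal components), this gives exactly the paper's formula $2T_{ijkl}=\bar g_{ik}\partial_x\bar g_{jl}+\bar g_{jl}\partial_x\bar g_{ik}-\bar g_{il}\partial_x\bar g_{jk}-\bar g_{jk}\partial_x\bar g_{il}$, so the two computations agree, and your version additionally makes the geometric content explicit: $T$ is the Kulkarni--Nomizu product of $\bar g$ with the second fundamental form of the level sets of $x$, so its boundary vanishing is literally the totally geodesic condition. Your weight bookkeeping ($|S|_g=x^4|S|_{\bar g}$ for $(4,0)$-tensors, so $x^{-2}R(\bar g)$ contributes $O(x^2)$ automatically while $x^{-3}T$ contributes $O(x^2)$ exactly when $T=O(x)$) and the trace step for Ricci parallel the paper's use of \eqref{AH-curvature Ricci}, and your defining-function-independence argument (comparability of any two defining functions) is the one the paper leaves implicit. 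What your route buys is that the symmetries of $T$, the role of the geodesic gauge, and the second-fundamental-form interpretation all come for free from the conformal-change formula; what the paper's route buys is self-containedness and immediate access to the component formulas \eqref{AH-curvature tensor T}, \eqref{AH-curvature Ricci} that are reused later.

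One step you should write out explicitly in the Ricci direction: the trace of \eqref{AH-curvature} produces the combination $(m-1)\partial_x\bar g+(\operatorname{tr}_{\bar g}\partial_x\bar g)\,\bar g$ at order $x^{-1}$, and to get ``only under the same condition'' you must check that the vanishing of this combination at $x=0$ forces $\partial_x\bar g|_{x=0}=0$ (take its trace again to kill $\operatorname{tr}_{\bar g}\partial_x\bar g$, using $m\geq 2$). This is a one-line linear-algebra fact, also used silently by the paper when it reads the equivalence off \eqref{AH-curvature Ricci}, so it is a presentational omission rather than a gap.
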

\proof Note that $\operatorname{Rm} + g \owedge g \in x^2 C^{l-2+
\beta}_e(X)$ is equivalent to that $T$ vanishes at the boundary $\partial X$,
also note that the condition that boundary $\partial X$ is totally geodesic in $X$ under $\bar{g}$
is equivalent to $\partial_x \bar g_{ab}|_{x=0} = 0$, where $a, b \in \{1, 2, \cdots, m\}$.

After a long but simple calculation, we get
\begin{equation}\label{AH-curvature tensor T}
2T_{ijkl} = \bar g_{ik}\partial_x\bar g_{jl} + \bar g_{jl}\partial_x\bar g_{ik} 
- \bar g_{il}\partial_x \bar g_{jk} - \bar g_{jk}\partial_x \bar g_{il}.
\end{equation}
Therefore $T$ vanishes at the boundary $\partial X$ if and only if $\partial_x \bar g_{ab}|_{
x=0} = 0$.

Taking the trace of (\ref{AH-curvature}) we have
\begin{equation} \label{AH-curvature Ricci}
R_{ij}(g)=-mg_{ij} +\frac{1}{2} x^{-1} \left (\bar{g}_{ij} \bar{g}^{kl} \partial_x \bar{g}_{kl}
+(m-1) \partial_x \bar{g}_{ij} \right ) -x^2R_{ij}(\bar{g}). 
\end{equation}
Hence  $\operatorname{Rc} + mg \in x^2 C^{l-2+\beta}_e(X)$ is equivalent to  
$\partial_x \bar g_{ab}|_{x=0} = 0$. The lemma follows from the equivalences proved above.
\endproof

Let $(X^{m+1}, g_0)$ be a $C^{l+\beta}$ AH manifold.
As a consequence of Lemma \ref{calculation-AH-curvature}, for time-dependent cases, we have
 \begin{equation} \label{eq ut Rm asymp behav}
\operatorname{Rm}_{g_u(t)}  +  g_u(t) \owedge g_u(t) \in  x^2 C_e^{k-2+\alpha, 
\frac{k -2+\alpha}2}(X_T)
 \end{equation}
for $g_u(t) = g_0 + u(\cdot, t)$ and a symmetric $2$-tensor $u \in x^2 C^{k+\alpha, \frac{k+\alpha}
2}_e(X_T)$, provided that $\operatorname{Rc}_{g_0} + m g_0 \in x^2 C^{l-2
+\beta}_e(X)$.  
We also obtain by a direct calculation 

\begin{align}
\|\operatorname{Rm}_{g_u(t)} -\operatorname{Rm}_{ g_{\tilde{u}} (t)} \|_{  x^{\mu} C_e^{k-2+\alpha,
\frac{k -2+ \alpha}2}(X_T)} \leq C \| u -\tilde{u} \|_{ x^{\mu}C^{k+\alpha, \frac{k+\alpha}2}_e(X_T)}
\label{eq u tilde u cur diff est}
\end{align}
for symmetric $2$-tensors $u, \tilde u \in x^\mu C^{k+\alpha, \frac {k+\alpha}2}_e (X_T)$. 
The other very useful fact for us is the following:

\begin{lemma} \label{Lemma 3.7-Lee} (\cite[Lemma 3.7]{Le06}) Let $(X^{m+1},  g)$ 
be a $C^{l+\beta}$ AH manifold. 
For 2-tensors, we have continuous embedding 
\begin{align}
x^\mu C^{k+\alpha}(\overline{X}) & \hookrightarrow x^{\mu+2} C^{k+\alpha}(X)
\label{Lemma 3.7-Lee-a}\\
x^{k+\alpha+2}C^{k+\alpha}(X) & \hookrightarrow C^{k+\alpha}(\overline{X}) 
\label{Lemma 3.7-Lee-b}
\end{align}
for $k+\alpha \leq l+\beta$.
\end{lemma}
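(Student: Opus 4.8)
The plan is to localize to a collar neighborhood of $\partial X$ and reduce both embeddings to a single scaling relation between the two Hölder structures: the AH one built from the metric $g$ (the space $C^{k+\alpha}(X)=C^{k+\alpha}_e(X)$, using $g$-covariant derivatives and $g$-distance in the Hölder seminorm) and the compactified one built from $\bar g = x^2 g$ (the space $C^{k+\alpha}(\overline X)$). Away from $\partial X$ the function $x$ is bounded below, the two metrics are uniformly equivalent together with all their derivatives, and the weights $x^\mu$ are harmless; so, after a partition of unity, only the region $\{x<\varepsilon\}$ matters. There I use the normal form $g = x^{-2}(dx^2+g_x)$ and the $g$-bounded frame $E_I\in\{x\partial_x\}\cup\{xe_a\}$, where $\{e_a\}$ is a $g_x$-orthonormal frame on $\partial X$; equivalently I record that $|dx|_g = x$, since $x$ is geodesic.

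The heart of the matter is the scaling of the two structures for a symmetric $(0,2)$-tensor $w$. First, the pointwise norm of any $(0,p)$-tensor scales as $x^p$, because $g^{-1}=x^2\bar g^{-1}$; in particular $|w|_g = x^2|w|_{\bar g}$, which is the sole origin of the shift by $2$ (it is the only place the tensor rank enters). Applying this to the iterated $g$-covariant derivative $\nabla^j w$, a $(0,2+j)$-tensor, gives $|\nabla^j w|_g = x^{2+j}|\nabla^j w|_{\bar g}$; and since $g=x^{-2}\bar g$ is a conformal change, $\nabla = \bar\nabla + A$ with difference tensor $A=O(x^{-1})$ coming from $d(-\log x)=-x^{-1}dx$, so $|\nabla^j w|_{\bar g}$ and $|\bar\nabla^j w|_{\bar g}$ agree modulo terms of strictly lower differential order with coefficients bounded on $\{x<\varepsilon\}$. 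Finally, because $g=x^{-2}\bar g$, at scales below $x$ one has $d_g\approx x^{-1}d_{\bar g}$, so the seminorms compare locally by $[\,\cdot\,]_{\alpha;g}\approx x^{\alpha}[\,\cdot\,]_{\alpha;\bar g}$. Collecting these, the $j$-th derivative datum of $w$ in $C^{k+\alpha}(\overline X)$ converts into the corresponding $C^{k+\alpha}_e(X)$ quantity with a factor $x^{2+j}$, and the top Hölder seminorm with a factor $x^{2+k+\alpha}$, up to lower-order corrections absorbed by positive powers of $x$.

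Both embeddings then follow by weight bookkeeping. For (\ref{Lemma 3.7-Lee-a}): given $x^{-\mu}w\in C^{k+\alpha}(\overline X)$, its $\bar g$-derivatives up to order $k$ are bounded, and the factor $x^{2+j}$ relating $g$- and $\bar g$-measured $j$-th derivatives supplies exactly the two powers of $x$ needed to match the weight shift (the extra $x^{j}$ only improving matters on $\{x<\varepsilon\}$), so $x^{-\mu-2}w$ has bounded $g$-covariant derivatives with $\alpha$-Hölder top derivative; hence $w\in x^{\mu+2}C^{k+\alpha}_e(X)$. For (\ref{Lemma 3.7-Lee-b}): writing $w=x^{k+\alpha+2}v$ with $v\in C^{k+\alpha}_e(X)$, the decay exponent $k+\alpha+2$ is precisely the budget needed to absorb the reciprocal factors $x^{-(2+j)}$ for $j\le k$ and $x^{-(2+k+\alpha)}$ for the top Hölder term when converting the bounded $g$-data of $v$ into $\bar g$-data, leaving $w$ with bounded $\bar g$-derivatives up to order $k$ and an $\alpha$-Hölder $k$-th derivative, i.e. $w\in C^{k+\alpha}(\overline X)$. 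An alternative organization that makes the shift by $2$ transparent is to pass to the frame components $w(E_I,E_J)$: these equal $x^2$ times the coordinate components, reducing the tensor statements to the scalar embeddings $x^\mu C^{k+\alpha}(\overline X)\hookrightarrow x^\mu C^{k+\alpha}_e(X)$ and $x^{k+\alpha}C^{k+\alpha}_e(X)\hookrightarrow C^{k+\alpha}(\overline X)$.

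The step I expect to be the genuine obstacle is the Hölder seminorm comparison, because the relation $d_g\approx x^{-1}d_{\bar g}$ is valid only for pairs of points separated by a distance small compared to $x$; at larger separations the two distances cease to be proportional. The remedy is the standard splitting of the difference quotient $|T(p)-T(q)|/d(p,q)^\alpha$ into a near-diagonal part (where $d_{\bar g}(p,q)\lesssim x$ and the local comparison applies) and a far part (where the quotient is dominated by the already-controlled supremum), followed by a uniform patching across the collar. Simultaneously one must track the connection-correction terms $A=O(x^{-1})$ to confirm that they are swallowed by the available positive powers of $x$ rather than accumulating across the $k$ differentiations.
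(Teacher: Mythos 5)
The paper never proves this lemma, so there is no internal argument to compare against: the statement is quoted verbatim from Lee, and the citation \cite[Lemma 3.7]{Le06} is the paper's entire justification. Judged on its own, your argument is correct in outline and is essentially the standard proof of Lee's lemma, reorganized: where Lee runs the scaling through M\"obius charts (coordinate balls of $\bar g$-size comparable to $x$, rescaled to unit size), you run it through the pointwise conversion of tensor norms, $|w|_g = x^2|w|_{\bar g}$ for covariant $2$-tensors (correctly identified as the sole source of the shift by $2$), the factors $x^{2+j}$ for $j$-th derivatives and $x^{2+k+\alpha}$ for the top H\"older seminorm, and the near/far splitting of the H\"older quotient, which is indeed the genuinely delicate point since $d_g \approx x^{-1} d_{\bar g}$ holds only below scale $x$. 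Your "alternative organization" via the bounded frame, $w(E_I,E_J) = x^2 w_{IJ}$, reducing to the scalar (weight-zero) embeddings, is in fact the cleanest route: it sidesteps the parallel-transport issue in defining H\"older quotients of tensors and matches how the spaces $C^{k+\alpha}_e$ are actually defined in \cite{Le06,Ba11}. What your route buys is a self-contained proof; what the citation buys the authors is brevity and Lee's general statement for tensor bundles of arbitrary weight.

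One quantitative claim in your write-up is too optimistic and should be repaired in a careful version, though it does not break the proof. The connection corrections do \emph{not} have "coefficients bounded on $\{x<\varepsilon\}$" when measured in $\bar g$: one has $|\bar\nabla^m A|_{\bar g} = O(x^{-1-m})$, so a typical correction term $\bar\nabla^{m_1}A * \cdots * \bar\nabla^{m_s}A * \bar\nabla^{i}(x^{-2}\omega)$ inside $\nabla^j(x^{-2}\omega)$ has $\bar g$-norm of order exactly $x^{-2-j}$, i.e.\ the same critical order as the leading term. These corrections are not "of strictly lower order, absorbed by positive powers of $x$", and likewise the "extra $x^{j}$" in the conversion factor $x^{2+j}$ is consumed exactly by derivatives falling on the weight and on $A$, rather than "only improving matters": the bookkeeping closes with no room to spare. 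The clean way to state what you actually need is that $A$ and its $\nabla$-derivatives are bounded in $g$-norm (for a $(1,2)$-tensor, $|A|_g = x|A|_{\bar g} = O(1)$, and similarly for its derivatives), so that every conversion becomes an identity in weights; with that correction, and with the weight-factor differences in the H\"older step controlled by $|x(p)-x(q)|\le d_{\bar g}(p,q)$ (valid since $|dx|_{\bar g}=1$), your plan goes through as written.
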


Consequently, we have that $g_u(t)$ is of $C^{k+\alpha}$ AH if $g_0$ is $C^{k+\alpha}$ 
AH and $u\in x^\mu C^{k+\alpha, 
\frac{k+\alpha}2}(X_T)$ for $\mu \geq k+\alpha$.

%%%%%%%%%%%%%%%%%%%%%%%%%%%%%%%%%%%%%%%%%%%%%%%%
\subsection{Elliptic Schauder estimates on AH manifolds}

To treat the pressure function $p$ when solving equation \eqref{eq crf g p orig general}, 
we recall the isomorphism property of $- \Delta +(m+1)$ 
on spaces of functions.
The following is an immediate consequence of \cite[Lemma 3.3]{Le95}.

\begin{lemma} \label{isomorphism-function} (\cite[Lemma 3.3]{Le95})
Let $(X^{m+1}, \ g)$ be a $C^{l+\beta}$ AH manifold. Then, for $k-1+\alpha \leq l
+\beta$ and $\mu\in (-1, m+1)$, 
$$
-\Delta_g + (m+1): x^\mu C^{k+\alpha}_e(X) \to  x^\mu C^{k -2 +\alpha}_e (X)
$$
is an isomorphism. 
\end{lemma}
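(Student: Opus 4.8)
The plan is to deduce the isomorphism from the Fredholm theory for uniformly degenerate (``$0$-elliptic'') geometric operators on conformally compact manifolds developed in \cite{Mz91, Le95}, by checking that $-\Delta_g + (m+1)$ satisfies the hypotheses of \cite[Lemma 3.3]{Le95}. The operator is second order and $0$-elliptic, so its mapping properties between the weighted H\"older spaces $x^\mu C^{k+\alpha}_e(X)$ are controlled by its indicial roots at $\partial X$. First I would compute the indicial operator: in the normal form $g = x^{-2}(dx^2 + g_x)$ one finds $\Delta_g x^s = s(s-m)x^s + O(x^{s+1})$, so that the indicial polynomial of $-\Delta_g + (m+1)$ is
\[
I(s) = -s(s-m) + (m+1) = -(s+1)\left(s-(m+1)\right),
\]
whose roots are $s=-1$ and $s=m+1$. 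Thus the hypothesis $\mu\in(-1,m+1)$ says exactly that the weight lies strictly between the two indicial roots, which is the regime in which the normal operator is invertible and a parametrix exists.

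With the indicial roots identified, the next step is to invoke the parametrix construction of the $0$-calculus to conclude that, for $\mu$ strictly between $-1$ and $m+1$, the map $-\Delta_g + (m+1):x^\mu C^{k+\alpha}_e(X)\to x^\mu C^{k-2+\alpha}_e(X)$ is Fredholm and satisfies the semi-Fredholm a priori estimate
\[
\|u\|_{x^\mu C^{k+\alpha}_e(X)}\le C\left(\|(-\Delta_g+(m+1))u\|_{x^\mu C^{k-2+\alpha}_e(X)} + \|u\|_{x^\mu C^0_e(X)}\right).
\]
Because the two indicial roots $-1$ and $m+1$ are symmetric about the $L^2$-weight $m/2$, the index is independent of $\mu$ throughout the interval and equals that of the self-adjoint realization at $\mu=m/2$, hence is zero.

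It then remains to show the kernel is trivial, which I would do with the maximum principle. If $(-\Delta_g+(m+1))u=0$ with $u\in x^\mu C^{k+\alpha}_e(X)$ and $\mu>-1$, then boundary regularity in the $0$-calculus forces the leading asymptotics of $u$ to occur at an indicial root exceeding $-1$; since the only roots are $-1$ and $m+1$, we get $u=O(x^{m+1})$, so in particular $u\to 0$ at $\partial X$ (using the embedding Lemma \ref{Lemma 3.7-Lee} to translate the weighted bound into genuine boundary decay). As the zeroth-order coefficient $m+1$ is strictly positive, a decaying solution of $-\Delta_g u+(m+1)u=0$ cannot attain a positive maximum or negative minimum, so $u\equiv 0$; equivalently, $u\in L^2$ together with positivity of $-\Delta_g+(m+1)$ kills it. Trivial kernel combined with index zero yields trivial cokernel, hence surjectivity, and with the a priori estimate this gives the bounded two-sided inverse, completing the isomorphism.

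The main obstacle is the Fredholm/parametrix step: $\Delta_g$ degenerates at $\partial X$, so interior elliptic theory does not apply uniformly up to the boundary, and one needs the uniform degenerate Schauder estimates and the invertibility of the normal operator supplied by \cite{Mz91, Le95}. The indicial computation is precisely what makes this machinery usable, and it is the restriction $\mu\in(-1,m+1)$ that guarantees the normal operator is invertible; for weights outside this interval the normal operator becomes non-invertible and the operator fails to be Fredholm. Since \cite[Lemma 3.3]{Le95} packages exactly this analysis, the only genuine work is verifying its hypotheses, namely $0$-ellipticity, the indicial roots $-1,\,m+1$ above, and the positivity of the zeroth-order term used for injectivity.
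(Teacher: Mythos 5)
Your proposal is correct and takes essentially the same route as the paper: the paper offers no argument of its own but simply quotes \cite[Lemma 3.3]{Le95} as giving the statement immediately, and that cited lemma is precisely the weighted-H\"older isomorphism theorem whose hypotheses you verify, with your indicial computation $-s(s-m)+(m+1)=-(s+1)\bigl(s-(m+1)\bigr)$ correctly identifying the roots $-1$ and $m+1$ that bracket the weight interval $(-1,m+1)$. Your further sketch of the $0$-calculus parametrix, the index-zero argument via the self-adjoint $L^2$ realization at the critical weight $m/2$, and the maximum-principle/positivity step killing the kernel is an accurate reconstruction of the machinery behind Lee's lemma, but it is work the paper itself does not reproduce.
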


For our purpose we need to solve the pressure equation $p(\cdot, t)$ in 
\eqref{eq crf g p orig general} for each $t\in [0, T]$, in other words,  
we need Schauder estimates uniform in the time variable like [LQZ, Lemma 3.11]. 
To apply Lemma \ref{isomorphism-function} to metric $g_u(t) = g_0 + u (\cdot, t)$ 
at each $t \in [0, T]$,
we need $g_u(t)$ to be at least $C^{k-1+\alpha}$ AH and 
close to $g_0$ in some appropriate sense. In fact, as a consequence
of Lemma \ref{Lemma 3.7-Lee} and Lemma \ref{isomorphism-function}, we have 

\begin{lemma} \label{Lemma 3.11-LQZ}
Suppose that $(X^{m+1}, g_0)$ is $C^{l+\beta}$ AH. Then for $k-1+\alpha\leq l+\beta$
and $\mu\in (-1, m+1)$, 
there exist $\delta >0$ and $C>0$ such that
\begin{equation} \label{schauder-t}
\|(-\Delta_{g_u(t)} + (m+1))^{-1} \phi \|_{x^\mu C^{k+\alpha, \frac{k-2+\alpha}2}_e(X_T)}\leq C 
\|\phi\|_{x^\mu C^{k-2+\alpha, \frac{k-2+\alpha}2}_e(X_T)},
\end{equation}
provided that
$$\|u\|_{x^\nu C^{k-1+\alpha, \frac{k-1+\alpha}2}_e(X_T)} \leq \delta$$ and $\nu \geq k-1+\alpha$.
\end{lemma}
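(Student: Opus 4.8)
The plan is to solve the elliptic problem slice by slice in time, treating the time-dependent operator $L_u(t) := -\Delta_{g_u(t)} + (m+1)$ as a uniformly small perturbation of the fixed operator $L_0 := -\Delta_{g_0} + (m+1)$, for which Lemma \ref{isomorphism-function} already supplies an isomorphism, and then to upgrade the resulting $t$-uniform spatial estimate to the full parabolic norm by a difference argument in time. Throughout, $v(\cdot, t) := L_u(t)^{-1} \phi(\cdot, t)$ denotes the solution we wish to bound.

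First I would fix $t$ and control the spatial norm. Writing $L_u(t) = L_0 + E(t)$ with $E(t) := \Delta_{g_0} - \Delta_{g_u(t)}$, the error $E(t)$ is a second order operator whose coefficients are algebraic expressions in $g_0^{-1}$, $g_u(t)^{-1}$ and in first derivatives of $g_0$ and $u$. Since $u \in x^\nu C^{k-1+\alpha, \frac{k-1+\alpha}{2}}_e(X_T)$ with $\nu \geq k-1+\alpha$, the consequence of Lemma \ref{Lemma 3.7-Lee} guarantees that $g_u(t)$ is a genuine $C^{k-1+\alpha}$ AH metric, so $\Delta_{g_u(t)}$ is uniformly degenerate and preserves the weighted spaces; consequently $E(t)$ maps $x^\mu C^{k+\alpha}_e(X) \to x^\mu C^{k-2+\alpha}_e(X)$, and by the multiplicative structure of the weighted H\"older spaces (as exploited in \eqref{eq u tilde u cur diff est}) its operator norm is bounded by $C \, \| u(\cdot, t) \|_{x^\nu C^{k-1+\alpha}_e(X)} \leq C\delta$. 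By Lemma \ref{isomorphism-function} the operator $L_0$ is an isomorphism $x^\mu C^{k+\alpha}_e(X) \to x^\mu C^{k-2+\alpha}_e(X)$; choosing $\delta$ so that $C\delta \, \| L_0^{-1} \| \leq \tfrac{1}{2}$, a Neumann series shows that $L_u(t)$ is invertible with $\| L_u(t)^{-1} \| \leq 2 \| L_0^{-1} \|$ uniformly in $t$, whence $\sup_t \| v(\cdot, t) \|_{x^\mu C^{k+\alpha}_e(X)} \leq C \sup_t \| \phi(\cdot, t) \|_{x^\mu C^{k-2+\alpha}_e(X)}$.

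It remains to bound the time-H\"older part of the norm. For $t \neq s$ I would subtract the two identities $L_u(t) v(\cdot,t) = \phi(\cdot, t)$ and $L_u(s) v(\cdot,s) = \phi(\cdot, s)$ to obtain
\[
v(\cdot,t) - v(\cdot,s) = L_u(t)^{-1}\Big[ \big( \phi(\cdot,t) - \phi(\cdot,s) \big) - \big( L_u(t) - L_u(s) \big) v(\cdot,s) \Big].
\]
The first bracketed term is controlled by the time-H\"older seminorm of $\phi$ in $x^\mu C^{k-2+\alpha}_e(X)$, which is part of $\| \phi \|_{x^\mu C^{k-2+\alpha, \frac{k-2+\alpha}{2}}_e(X_T)}$. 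For the second, the coefficients of $L_u(t) - L_u(s)$ inherit the time regularity of $u$, so $\big( L_u(t) - L_u(s) \big) v(\cdot,s)$ is bounded in $x^\mu C^{k-2+\alpha}_e(X)$ by $C |t-s|^{\frac{k-1+\alpha}{2}} \, \| u \|_{x^\nu C^{k-1+\alpha, \frac{k-1+\alpha}{2}}_e(X_T)} \, \| v(\cdot,s) \|_{x^\mu C^{k+\alpha}_e(X)}$; since $T < \infty$ and $\tfrac{k-1+\alpha}{2} \geq \tfrac{k-2+\alpha}{2}$, the factor $|t-s|^{\frac{k-1+\alpha}{2}}$ is dominated by $T^{1/2} |t-s|^{\frac{k-2+\alpha}{2}}$. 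Applying the uniform bound on $L_u(t)^{-1}$ and then the spatial estimate for $v(\cdot, s)$ obtained above, the time-H\"older seminorm of $v$ is bounded by $C \| \phi \|_{x^\mu C^{k-2+\alpha, \frac{k-2+\alpha}{2}}_e(X_T)}$, which together with the spatial estimate yields \eqref{schauder-t}.

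The step I expect to be most delicate is the control of $E(t)$ and of $L_u(t) - L_u(s)$ as bounded operators between the weighted spaces, with operator norms genuinely small and with the correct boundary weight $x^\mu$ preserved: one must verify that the $u$-dependent coefficients decay fast enough at $\partial X$ (guaranteed by $\nu \geq k-1+\alpha$) so that no weight is lost, and that the assignment $u \mapsto (\text{coefficients of } \Delta_{g_u})$ is Lipschitz in the relevant parabolic norm, in both its spatial Schauder part and its $\tfrac{k-1+\alpha}{2}$-H\"older-in-time part. These are the analogues for the Laplacian of the curvature estimate \eqref{eq u tilde u cur diff est}, and establishing them within the weighted parabolic H\"older calculus is where the real work lies.
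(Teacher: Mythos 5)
Your proposal is correct and follows essentially the same route as the paper: the paper offers no detailed proof, asserting the lemma ``as a consequence of'' the embedding Lemma \ref{Lemma 3.7-Lee} (which gives that $g_u(t)$ is a genuine $C^{k-1+\alpha}$ AH metric when $\nu \geq k-1+\alpha$) and the isomorphism Lemma \ref{isomorphism-function}, combined with smallness of $u$ --- precisely the perturbation-of-the-isomorphism argument you carry out via the Neumann series, with time treated as a parameter and the time-H\"older regularity recovered by the difference identity for $v(\cdot,t)-v(\cdot,s)$. Your write-up in fact supplies the details the paper omits, and you correctly flag the only delicate point (the weighted-space mapping and Lipschitz bounds for the coefficient maps $u \mapsto \Delta_{g_u}$, where the exact time-H\"older exponent bookkeeping must match the hybrid norm conventions of \cite{Ba11}).
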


For a general initial metric $g_0$ with $C^{l+\beta}$ AH regularity, according to \cite{Ba11}, 
one may expect to work with $u \in xC^{2+\alpha, \frac {2+\alpha}2}_e(X_T)$
with $2+\alpha \leq l+\beta$ to prove the short time existence of solutions of form $g_u(t)$
by applying the contraction mapping theorem. 
This is due to, initially, 
$$
\operatorname{Rc}_{g_0} + m g_0 \in xC^{l-2+\beta}_e(X)
$$
(cf. \eqref{AH-curvature Ricci}). In the light of \eqref{Lemma 3.7-Lee-b}, 
 this only provides 
$C^\alpha$ AH regularity for the metrics $g_u(t)$ after initial time, 
which is not enough. 
We need the minimal $C^{1+\alpha}$ AH regularity for $g_u(t)$ in order to 
apply Lemma \ref{Lemma 3.11-LQZ}, and Lemma  
\ref{lem Laplace invertible weighted timed} and \ref{thm 3.2 in Ba11} below. 
Therefore one needs, at least for this technical reason, assume that
for the initial metric $g_0$
\begin{equation}\label{initial-condition}
\operatorname{Rc}_{g_0} + m g_0 \in x^2 C^{l-2+\beta}_e(X),
\end{equation}
i.e., $\partial X$ is total geodesic in $(\bar{X}, x^2g)$.
Later we will choose $k = 2$ and $\nu = 2 > 1+\alpha$ in applying Lemma \ref{Lemma 3.11-LQZ}. 
In the proof of Lemma \ref{lem Ba 4.5}, we will need $\|\operatorname{Rc}_{g_0} + m g_0
\|_{x^2 C^{2+\alpha}_e(X)}$ to be bounded, this leads us to choose $l =4$ and $\beta=\alpha$
in the proof of short time existence.
\\

Based on Lemma \ref{isomorphism-function} and Lemma \ref{Lemma 3.11-LQZ}, 
we may define 
\begin{equation} \label{eq cal P def}
\mathcal{P} (g) = \frac 1m (-\Delta_{g} + (m+1))^{-1}(|\text{Rc}_{g}+ mg|^2),
\end{equation}
for any $C^{1+\alpha}$ AH metric $g$.
Then we can easily derive the following.

\begin{lemma} \label{lem Laplace invertible weighted timed}
Let $(X^{m+1}, \ g_0)$ be a  $C^{l+ \beta}$ AH manifold. Let  $k \in \mathbb{N}, \alpha\in (0, 1)$, 
and $k + \alpha \leq l+\beta$. Then there are small positive constants $T$ and $\epsilon$ 
such that the following hold.
Let $g_u(t) = g_0+u(\cdot, t)$ with 
\[
u\in x^\mu C^{k+\alpha, \frac{\alpha}2}_e(X_T) \bigcap 
x^\nu C^{k-1+\alpha, \frac {\alpha}2}_e(X_T).
\]

\vskip .1cm
\noindent (i) For $ \|u\|_{x^\nu C^{k-1+\alpha, \frac{\alpha}2}_e(X_T)} \leq \epsilon$, 
we have $\mathcal{P}(g_u) \in x^{2\mu} C_e^{k+\alpha, \frac{\alpha}2}(X_T)$, provided 
that $2\mu\in (0, 2)$, $\nu > k-1+\alpha$;  and

\vskip .1cm
\noindent (ii) For $u, \tilde{u}$ in  some given ball in $x^\mu C^{k+\alpha, 
\frac{\alpha}2}_{e}(X_T)$ which satisfy 
\[
\| u \|_{x^\nu C^{k-1+\alpha, \frac{\alpha}2}_e(X_T)} < \epsilon, 
\text{ and } \| \tilde u \|_{x^\nu C^{k-1+\alpha, \frac{\alpha}2}(X_T)} < \epsilon,
\] 
 we have
 \[
\| \mathcal{P}(g_u) -  \mathcal{P}(g_{\tilde{u}}) \|_{x^{\mu} C^{k+\alpha, \frac{\alpha}2}_{e}(X_T)} 
 \leq C \|u - \tilde u \|_{x^\mu C^{k+\alpha, \frac {\alpha}2}_e(X_T)},
 \]
 provided that $\mu \in (0,  m+1)$, $\nu > k-1+\alpha$.
 \end{lemma}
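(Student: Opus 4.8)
The plan is to analyze the composite operator $\mathcal{P}$ in \eqref{eq cal P def} by factoring it into two pieces: the nonlinear algebraic map $g_u \mapsto |\operatorname{Rc}_{g_u} + m g_u|^2_{g_u}$, and the linear inverse $(-\Delta_{g_u} + (m+1))^{-1}$ whose mapping properties are supplied by Lemma \ref{Lemma 3.11-LQZ}. First I would use the smallness condition $\|u\|_{x^\nu C^{k-1+\alpha, \alpha/2}_e(X_T)} \leq \epsilon$ together with the embedding in Lemma \ref{Lemma 3.7-Lee} and the discussion following it to guarantee that each $g_u(t)$ is genuinely $C^{1+\alpha}$ AH and close enough to $g_0$ that the isomorphism in Lemma \ref{Lemma 3.11-LQZ} applies uniformly in $t\in[0,T]$; this is exactly why the hypotheses $\nu > k-1+\alpha$ and $\mu\in(0,m+1)$ appear. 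For part (i), the key weight bookkeeping is that $\operatorname{Rc}_{g_u} + m g_u \in x^\mu C^{k+\alpha,\alpha/2}_e(X_T)$ (by \eqref{AH-curvature Ricci} and the fact that $u$ lives at weight $\mu$), so its squared norm lands at weight $x^{2\mu}$; then $(-\Delta_{g_u}+(m+1))^{-1}$ preserves that weight provided $2\mu\in(0,m+1)$, and since we only claim $2\mu\in(0,2)$ the hypothesis is comfortably inside the admissible range $(-1,m+1)$ demanded by Lemma \ref{isomorphism-function}.

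For the regularity count I would track the derivative loss carefully. The quantity $|\operatorname{Rc}_{g_u} + m g_u|^2$ is a smooth (in fact polynomial) function of $g_u$, its inverse, and two derivatives of $g_u$; since $u \in x^\mu C^{k+\alpha,\alpha/2}_e(X_T)$ the Ricci curvature sits in $x^\mu C^{k-2+\alpha,\alpha/2}_e(X_T)$, so naively the source term is only $C^{k-2+\alpha}$ in space. The point is that $(-\Delta_{g_u}+(m+1))^{-1}$ gains two spatial derivatives — this is precisely the content of the Schauder estimate \eqref{schauder-t} with the shift from $k-2+\alpha$ up to $k+\alpha$ — so the output recovers the full $x^{2\mu}C^{k+\alpha,\alpha/2}_e(X_T)$ regularity claimed. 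The only subtlety is the parabolic time weight: the inverse is applied pointwise in $t$, so the time-Hölder norm of the output is controlled by the time-Hölder norm of the source through the uniform-in-$t$ estimate \eqref{schauder-t}, and here one uses that the coefficients of $\Delta_{g_u}$ depend on $t$ only through $u$, whose time regularity is built into the defining spaces.

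Part (ii) is a standard Lipschitz/difference estimate obtained by polarizing the same argument. I would write $\mathcal{P}(g_u) - \mathcal{P}(g_{\tilde u})$ as the sum of two terms: one where the inverse operator is fixed (say at $g_u$) and the source difference $|\operatorname{Rc}_{g_u}+mg_u|^2 - |\operatorname{Rc}_{g_{\tilde u}}+m g_{\tilde u}|^2$ is estimated using \eqref{eq u tilde u cur diff est} and the algebraic identity $a^2-b^2=(a+b)(a-b)$, and a second term measuring the difference of the two inverse operators acting on a fixed source, which is handled by the resolvent identity
\[
(-\Delta_{g_u}+(m+1))^{-1} - (-\Delta_{g_{\tilde u}}+(m+1))^{-1}
= (-\Delta_{g_u}+(m+1))^{-1}\bigl(\Delta_{g_u}-\Delta_{g_{\tilde u}}\bigr)(-\Delta_{g_{\tilde u}}+(m+1))^{-1},
\]
together with the fact that $\Delta_{g_u}-\Delta_{g_{\tilde u}}$ is a second-order operator whose coefficients depend Lipschitz-continuously on $u-\tilde u$. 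The main obstacle I anticipate is bookkeeping the weights and the derivative counts consistently across the resolvent identity — in particular confirming that every intermediate source term stays inside the weight interval $(-1,m+1)$ where Lemma \ref{isomorphism-function} and Lemma \ref{Lemma 3.11-LQZ} are valid, and that the two-derivative gain of the inverse exactly compensates the two-derivative loss incurred by the difference operator $\Delta_{g_u}-\Delta_{g_{\tilde u}}$ — rather than any deep analytic difficulty, since all the hard estimates have already been isolated in the preceding lemmas.
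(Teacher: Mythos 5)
Your proposal is correct and follows essentially the route the paper intends: the paper gives no written proof of this lemma beyond the remark that it "can easily be derived" from Lemma \ref{isomorphism-function} and Lemma \ref{Lemma 3.11-LQZ}, which is exactly your factorization into the quadratic curvature source (with weight bookkeeping via \eqref{AH-curvature Ricci} and \eqref{eq u tilde u cur diff est}) followed by the uniform-in-$t$ inverse, and your polarization plus resolvent-identity argument for part (ii) is the standard way to supply the details the paper leaves implicit. One small correction: in your first paragraph the claim $\operatorname{Rc}_{g_u}+mg_u\in x^\mu C^{k+\alpha,\alpha/2}_e(X_T)$ overstates the spatial regularity — it should be $x^\mu C^{k-2+\alpha,\alpha/2}_e(X_T)$, as you yourself state correctly in the second paragraph where the two-derivative gain of the inverse restores $C^{k+\alpha}$.
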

 
We will use Lemma \ref{lem Laplace invertible weighted timed} with $\nu = 2$ 
and $k = 2$  below.

%%%%%%%%%%%%%%%%%%%%%%%%%%%%%%%%%%%%%%%
\subsection{Parabolic Schauder estimates on AH manifolds}

We will need the following basic parabolic Schauder estimate \cite[Theorem 3.2] {Ba11}, 
which covers the case
when $L$ is $\Delta_L + 2m$ on 2-tensors.

\begin{lemma}\label{thm 3.2 in Ba11} (\cite[Theorem 3.2]{Ba11}) 
Suppose that $(X^{m+1}, g_0)$ is $C^{l+\beta}$ AH. 
Suppose $L$ is a second-order linear uniformly degenerate elliptic operator 
with time-independent coefficients. Let $k+\alpha\leq l+\beta$.
Then for every $f \in x^{\mu} C^{k+\alpha, \frac{k+\alpha}2}_e (X_T)$ 
there is a solution $v$ in $x^{\mu} C^{k+2+\alpha, 
\frac{k+2+\alpha}2}_e (X_T )$ to equation
\begin{equation}\label{linear-parabolic}
(\partial_t - L) v(x,t) =f(x,t) \quad \text{ and } v(x,0) =0.
\end{equation}
Moreover, $v$ satisfies the parabolic Schauder estimate
\[
\| v \|_{x^{\mu} C^{k+2+\alpha, \frac{k+2+\alpha}2}_e (X_T)} \leq C \| f \|_{ 
x^{\mu} C^{k+\alpha, \frac{k+\alpha}2}_e (X_T )},
\]
where constant $C=C(T)$ is bounded when $T$ is small.
\end{lemma}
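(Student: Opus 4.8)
The plan is to prove this by combining a global a priori Schauder estimate with an existence argument, working in the ``0-geometry'' (uniformly degenerate) framework of Mazzeo \cite{Mz91} adapted to the parabolic setting. Since $L$ has time-independent coefficients, the first reduction is to conjugate away the weight: writing $v = x^\mu w$ and $f = x^\mu h$, the operator $\partial_t - L$ transforms into $\partial_t - L_\mu$, where $L_\mu = x^{-\mu} L\, x^\mu$ is again uniformly degenerate elliptic, so the whole problem becomes the unweighted estimate for $L_\mu$ on $C^{k+\alpha,\frac{k+\alpha}2}_e(X_T)$. The admissibility of the weight $\mu$, namely that it avoids the indicial roots of $L$ exactly as in the isomorphism range of Lemma \ref{isomorphism-function}, is what guarantees that no loss occurs in this conjugation.

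For the a priori estimate I would use a uniform cover of $X$ by ``M\"obius charts.'' Because $(X, g_0)$ is AH, there is a family of charts $\{\Phi_j\}$, each a diffeomorphism of a fixed model ball in which $g_0$ pulls back to a metric uniformly $C^{k+\alpha}$-comparable to the hyperbolic (or a fixed Euclidean) model, with uniformly bounded overlap. On each chart the operator $\partial_t - L_\mu$ becomes a uniformly parabolic operator whose coefficients are bounded in $C^{k+\alpha,\frac{k+\alpha}2}$ uniformly in $j$, so the classical interior and boundary Euclidean parabolic Schauder estimates apply with a constant independent of $j$. Summing these local estimates against a partition of unity subordinate to the cover, using the bounded-overlap property, yields the global a priori estimate
\[
\|v\|_{x^\mu C^{k+2+\alpha,\frac{k+2+\alpha}2}_e(X_T)} \leq C\left(\|f\|_{x^\mu C^{k+\alpha,\frac{k+\alpha}2}_e(X_T)} + \|v\|_{x^\mu C^{0}_e(X_T)}\right),
\]
and the zero-order term on the right is absorbed using the maximum principle together with the initial condition $v(\cdot,0)=0$.

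For existence I would run the continuity method. Connect $L$ to a model operator $L_0$ (for instance the scalar or tensor Laplacian of $g_0$, whose parabolic theory is controlled by the elliptic isomorphism of Lemma \ref{isomorphism-function} and the associated heat semigroup) through the family $L_s = (1-s)L_0 + sL$, $s \in [0,1]$, each member uniformly degenerate elliptic with the same indicial structure. The a priori estimate, uniform in $s$, gives the openness of the set of $s$ for which $(\partial_t - L_s)v = f$, $v(\cdot,0)=0$ is solvable in $x^\mu C^{k+2+\alpha,\frac{k+2+\alpha}2}_e(X_T)$; solvability at $s=0$ provides a starting point, and the same estimate gives closedness, so one reaches $s=1$. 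An equivalent route is to build the fundamental solution directly via a parabolic version of the Mazzeo--Melrose $0$-calculus and write $v$ by Duhamel's formula $v(\cdot,t) = \int_0^t e^{(t-s)L} f(\cdot,s)\,ds$; this formula also shows that the Schauder constant accumulated over $[0,T]$ is bounded, and in fact small, for small $T$, giving the claimed $C = C(T)$.

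The main obstacle is the boundary analysis: obtaining parabolic Schauder estimates that are genuinely uniform up to the conformal infinity $\{x=0\}$ with the sharp weight behavior. Away from the boundary everything is standard, but near $\{x=0\}$ the operator degenerates and one must control the fundamental solution (or, equivalently, the local chart estimates) in a manner that is uniform as the charts march to infinity and that respects the indicial-root condition on $\mu$. This is precisely where the specialized $0$-analysis of \cite{Mz91, Le06, Al07, Ba11} enters, and carrying it out rigorously in the parabolic H\"older scale is the technical heart of the result.
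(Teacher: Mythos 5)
Your proposal reconstructs a proof that the paper itself never gives: Lemma \ref{thm 3.2 in Ba11} is imported wholesale from \cite[Theorem 3.2]{Ba11}, so the only fair comparison is with the argument in \cite{Ba11}, whose general framework (uniform local Schauder estimates in M\"obius-type charts, a weighted sup bound, then existence) your sketch correctly identifies. But two steps of your plan have genuine gaps. First, your existence mechanism is circular. You start the continuity method at a model operator $L_0$ whose ``parabolic theory is controlled by the elliptic isomorphism of Lemma \ref{isomorphism-function} and the associated heat semigroup.'' The elliptic isomorphism gives no control on any heat semigroup: constructing $e^{tL_0}$ on the noncompact manifold $X$ and proving it acts on $x^{\mu}C^{k+\alpha,\frac{k+\alpha}2}_e$ boundedly with a Schauder gain \emph{is} the statement of the lemma for $L_0$, so your base case is exactly as hard as the theorem; likewise, your ``equivalent route'' of building the fundamental solution by a parabolic $0$-calculus is not a reformulation but a major construction that neither you nor the cited references supply. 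The non-circular route used in \cite{Ba11} is to solve initial--boundary value problems on a compact exhaustion $\{x\geq 1/j\}\times[0,T]$ by classical parabolic theory, prove estimates uniform in $j$, and pass to the limit; no model solvability is ever needed.

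Second, the step you describe as absorbing $\|v\|_{x^{\mu}C^{0}_e}$ ``using the maximum principle together with the initial condition'' is the technical heart of the proof, and you leave it unproven --- your closing paragraph concedes as much, which means the proposal defers rather than supplies the key estimate. For tensor-valued $v$ on a noncompact manifold this step needs a Kato-type reduction to a scalar inequality plus an explicit barrier, e.g. $\phi = A\,e^{\lambda t}x^{\mu}\sup|x^{-\mu}f|$ with $\lambda$ larger than the sup of the zeroth-order part of the conjugated operator $x^{-\mu}Lx^{\mu}$, which yields $\sup|x^{-\mu}v|\leq C(T)\sup|x^{-\mu}f|$ with $C(T)$ bounded as $T\to 0$. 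That same barrier computation exposes a misconception in your first paragraph: unlike the elliptic isomorphism of Lemma \ref{isomorphism-function}, the short-time parabolic problem with $v(\cdot,0)=0$ requires \emph{no} condition that $\mu$ avoid indicial roots --- the indicial polynomial enters only through the admissible size of $\lambda$, i.e. through the constant $C(T)$, which is precisely why the lemma is stated for arbitrary $\mu$ with $C=C(T)$ bounded for small $T$. Grafting the elliptic window $\mu\in(-1,m+1)$ onto the parabolic statement would needlessly weaken it (harmlessly for this paper, which applies it with $\mu=2$, but it shows the conjugation step and the Fredholm theory of \cite{Mz91} are playing different roles than you assign them).
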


%%%%%%%%%%%%%%%%%%%%%%%%%%%%%%%%%%%%%%%%%%%%%%%%%%
\subsection{AH metrics of constant scalar curvature} 
\label{subsec Holder space for CRF AH mflds}

In this subsection we recall the existence of a unique conformal deformation on a given AH manifold
to make the scalar curvature constant due to \cite[Theorem 1.2 and 1.3]{ACF}. 
This is significant because such a 
conformal deformation does not alter the conformal infinity of the AH manifold. 

\begin{lemma} \label{Theorem 1.2-ACF} 
Suppose that $(X^{m+1}, \ g)$ is smooth AH. Then there exists a unique 
conformal deformation $w^{\frac 4{m-1}}g$ such that
\begin{itemize}
\item $w$ is positive and in $C^{l+\beta}(\overline{X})$ for $l+\beta < m+1$;
\item metric $w^{\frac4{m-1}}g$ is $C^{l+\beta}$ AH with constant scalar curvature $-m(m+1)$;
\item $w(p) \to 1$ when $p$ approaches $\partial X$.
\end{itemize}
\end{lemma}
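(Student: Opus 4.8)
The plan is to reduce this to the classical Yamabe-type problem for asymptotically hyperbolic manifolds, which is precisely the content of \cite[Theorem 1.2 and 1.3]{ACF}; so the main task is to recast the desired conformal factor in the normalization stated here. Writing the conformal deformation as $\tilde g = w^{4/(m-1)} g$ with $w > 0$, recall that under such a conformal change the scalar curvature transforms according to the formula
\[
R_{\tilde g} = w^{-\frac{m+3}{m-1}}\left( -\frac{4m}{m-1}\,\Delta_g w + R_g\, w \right),
\]
using the sign convention $\Delta_g = \operatorname{div}\nabla$ in dimension $m+1$ (so that the conformal Laplacian coefficient is $\tfrac{4((m+1)-1)}{(m+1)-2}=\tfrac{4m}{m-1}$). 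Requiring $R_{\tilde g}=-m(m+1)$ is therefore equivalent to solving the semilinear elliptic equation
\[
-\frac{4m}{m-1}\,\Delta_g w + R_g\, w = -m(m+1)\, w^{\frac{m+3}{m-1}},
\]
which is the AH Yamabe equation with prescribed negative constant scalar curvature. First I would observe that since $(X^{m+1},g)$ is AH, its scalar curvature satisfies $R_g \to -m(m+1)$ at infinity (this is the asymptotic normalization built into the AH condition, giving sectional curvatures $\to -1$), so the constant function $w\equiv 1$ is an approximate solution near $\partial X$ and the boundary condition $w(p)\to 1$ is the natural one.

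Next I would invoke \cite{ACF} directly. Andersson--Chru\'sciel--Friedrich established existence and uniqueness of a positive solution $w$ to exactly this prescribed-scalar-curvature equation on conformally compact manifolds, together with the asymptotic boundary behavior $w \to 1$ and the optimal boundary regularity $w \in C^{l+\beta}(\overline X)$ for the polyhomogeneous/finite-regularity range $l+\beta < m+1$. The regularity ceiling $l+\beta < m+1$ is not an artifact of the method but reflects the indicial roots of the linearized operator $-\tfrac{4m}{m-1}\Delta_g + (R_g + m(m+1)\cdot\tfrac{m+3}{m-1})$: the relevant geometric boundary operator is, up to the conformal factor, $-\Delta_g$ shifted by the mass term $m+1$, whose two indicial exponents are $0$ and $m+1$, so solutions generically acquire a $O(x^{m+1})$ term that obstructs $C^{k}$ regularity beyond order $m+1$ at $\partial X$. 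The positivity of $w$ follows from the maximum principle applied to the equation (the zeroth-order coefficient has the correct sign because the target scalar curvature is negative), and uniqueness follows from the standard convexity/monotonicity argument: if $w_1, w_2$ are two solutions, subtracting the equations and testing against $w_1 - w_2$, the strict monotonicity of the nonlinearity $t \mapsto t^{(m+3)/(m-1)}$ forces $w_1 = w_2$.

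Finally I would verify that the resulting metric $\tilde g = w^{4/(m-1)} g$ is genuinely $C^{l+\beta}$ AH with the claimed constant scalar curvature. Since $w \in C^{l+\beta}(\overline X)$ is positive and $w \to 1$ at the boundary, the compactification $x^2 \tilde g = w^{4/(m-1)}\, x^2 g$ differs from $x^2 g$ by a positive $C^{l+\beta}(\overline X)$ factor that equals $1$ on $\partial X$; hence $x^2 \tilde g$ is again a $C^{l+\beta}$ metric on $\overline X$ with the \emph{same} induced boundary metric, so the conformal infinity is unchanged, and the sectional curvatures of $\tilde g$ still tend to $-1$ because the scalar curvature is now exactly $-m(m+1)$ and the conformal factor limits to a constant. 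I expect the main obstacle to be bookkeeping the regularity assertion rather than the existence: one must check that the finite-regularity AH hypothesis feeds correctly into the \cite{ACF} machinery and that the boundary regularity $C^{l+\beta}(\overline X)$ with the precise threshold $l+\beta < m+1$ is exactly what their theorem delivers, so that no smoothness is lost or spuriously gained in transcribing their result into the normalization used here. Since the statement explicitly cites \cite[Theorem 1.2 and 1.3]{ACF}, the cleanest proof simply translates those theorems into the present notation and records the three bulleted conclusions, with the computation above supplied only to confirm that the equation solved in \cite{ACF} is the prescribed-scalar-curvature equation for the target value $-m(m+1)$.
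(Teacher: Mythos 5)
Your proposal is correct and takes essentially the same approach as the paper: the paper gives no proof of this lemma at all, recording it purely as a recollection of \cite[Theorems 1.2 and 1.3]{ACF}, which is exactly the reduction-and-citation (plus notational translation of the Yamabe equation to the target value $-m(m+1)$) that you carry out. The only slip is in your heuristic aside on the regularity threshold: the indicial roots of the linearized operator $-\Delta_g+(m+1)$ are $-1$ and $m+1$ (consistent with the range $\mu\in(-1,m+1)$ in Lemma \ref{isomorphism-function}), not $0$ and $m+1$, and your integration-by-parts uniqueness sketch would need care on a noncompact AH manifold, but neither point is load-bearing since existence, uniqueness, and the $l+\beta<m+1$ regularity are all quoted directly from \cite{ACF}.
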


In the light of Lemma \ref{calculation-AH-curvature} we have

\begin{lemma}\label{Ricci-0-kept} Let $(X^{m+1}, g )$ be a smooth AH manifold. 
Suppose  that $\operatorname{Rc}_g + m g \in x^2C^{l-2+\beta}_e(X)$ with $l+\beta < m+1$. 
Then the conformal metric $w^\frac 4{m-1} g$  given in Lemma \ref{Theorem 1.2-ACF} 
is $C^{l+\beta}$ AH, and its traceless Ricci curvature is in 
$x^2C^{l-2+\beta}_e(X)$.
 \end{lemma}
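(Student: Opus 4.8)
```latex
\textbf{Proof proposal for Lemma \ref{Ricci-0-kept}.}
The plan is to show that the conformal factor $w$ from Lemma \ref{Theorem 1.2-ACF}
is not merely in $C^{l+\beta}(\overline{X})$ with boundary value $1$, but that
$w - 1$ vanishes to sufficiently high order at $\partial X$ so that the conformal change
preserves the condition ``$\partial X$ totally geodesic in $(\bar X, x^2 g)$,'' which by
Lemma \ref{calculation-AH-curvature} is equivalent to the stated decay
$\operatorname{Rc} + m g \in x^2 C^{l-2+\beta}_e(X)$. First I would fix a geodesic
defining function $x$ for $g$ and write $\bar g = x^2 g$; by Lemma
\ref{calculation-AH-curvature} the hypothesis $\operatorname{Rc}_g + m g \in
x^2 C^{l-2+\beta}_e(X)$ says precisely that $\partial_x \bar g_{ab}|_{x=0} = 0$. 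The
goal is to produce a geodesic defining function $\tilde x$ for the new metric
$\tilde g = w^{4/(m-1)} g$ and show the analogous first normal derivative of
$x^2 \tilde g$ vanishes on the boundary.

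Next I would analyze the scalar-curvature (Yamabe-type) equation that $w$ satisfies.
Writing $g$ in the form $x^{-2}(dx^2 + g_x)$, the conformal Laplacian equation forcing
$R_{\tilde g} = -m(m+1)$ becomes an elliptic equation for $w$ whose indicial roots at
the boundary are governed by the operator $-\Delta_g + \tfrac{(m-1)(m+1)}{4}$ (a shift
of the scalar Laplacian). The boundary condition $w \to 1$ fixes the leading behavior;
I would then use the asymptotic/polyhomogeneous expansion theory for such uniformly
degenerate equations (as in \cite{Mz91, Le06, ACF}) to read off the exponent at which
$w - 1$ first becomes nontrivial. The key computation is that, because the initial
traceless Ricci curvature already decays like $x^2$, the forcing term on the right-hand
side of the equation for $w - 1$ itself decays, so $w - 1 \in x^2 C^{l+\beta}(\overline X)$
rather than merely $O(x)$. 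This is the step where I would invoke the uniqueness and
regularity in Lemma \ref{Theorem 1.2-ACF} together with a direct expansion of the scalar
curvature of $w^{4/(m-1)} g$ in powers of $x$, matching coefficients order by order.

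Once $w = 1 + O(x^2)$ is established, I would compute $\partial_{\tilde x}(\tilde x^2
\tilde g)_{ab}|_{\tilde x = 0}$. Since $w^{4/(m-1)} = 1 + O(x^2)$, the conformal factor
does not disturb the first normal derivative of the compactified metric to leading order:
one checks that $\tilde x$ and $x$ agree to the relevant order near the boundary and that
$\overline{\tilde g}_{ab} = w^{4/(m-1)} \bar g_{ab}$ inherits $\partial_{\tilde x}
\overline{\tilde g}_{ab}|_{\tilde x = 0} = 0$ from the two facts $\partial_x \bar g_{ab}|
_{x=0} = 0$ and $\partial_x(w^{4/(m-1)})|_{x=0} = 0$. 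Feeding this back through Lemma
\ref{calculation-AH-curvature} (now applied to $\tilde g$) yields $\operatorname{Rc}_{
\tilde g} + m \tilde g \in x^2 C^{l-2+\beta}_e(X)$, and the traceless Ricci curvature,
being a component of $\operatorname{Rc}_{\tilde g} + m \tilde g$, lies in the same space.
I expect the main obstacle to be the bookkeeping in the asymptotic expansion: one must
confirm that no $O(x)$ term is generated in $w$ by the nonlinearity, which requires
knowing that the relevant indicial root does not resonate with the $x^1$ exponent and
that the source term's $x^1$ coefficient genuinely vanishes given the hypothesis on the
initial traceless Ricci tensor. Controlling that single coefficient is the crux of the
argument.
```
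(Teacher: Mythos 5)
Your overall strategy is the right one, and it is in fact the argument the paper leaves implicit: the paper states this lemma with no proof beyond the phrase ``in the light of Lemma~\ref{calculation-AH-curvature}'', so the real content --- that the conformal factor of Lemma~\ref{Theorem 1.2-ACF} satisfies $w = 1 + O(x^2)$ once the forcing term in the Yamabe-type equation is $O(x^2)$, and that such a conformal change preserves the totally geodesic boundary condition --- is exactly what you set out to supply, and in the right order.

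There is, however, one concrete error to fix, and it sits at what you yourself identify as the crux. The operator governing the decay of $w - 1$ is not $-\Delta_g + \tfrac{(m-1)(m+1)}{4}$: that is (up to sign) the zeroth-order shift of the conformal Laplacian $-\tfrac{4m}{m-1}\Delta_g + R_g$ alone, and it omits the linearization of the nonlinear term. Writing the constant-scalar-curvature equation as
\begin{equation*}
-\tfrac{4m}{m-1}\Delta_g w + R_g\, w = -m(m+1)\, w^{\frac{m+3}{m-1}}
\end{equation*}
and substituting $w = 1 + v$, the right-hand side contributes $-m(m+1)\tfrac{m+3}{m-1}\,v$ at linear order, and one finds
\begin{equation*}
\tfrac{4m}{m-1}\bigl(-\Delta_g + (m+1)\bigr)v = -\bigl(R_g + m(m+1)\bigr)(1+v) - m(m+1)\,Q(v), \qquad Q(v) = O(v^2),
\end{equation*}
so the relevant operator is $-\Delta_g + (m+1)$, precisely the operator of Lemma~\ref{isomorphism-function}, with indicial roots $-1$ and $m+1$. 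This matters: had the roots been those of the conformal Laplacian, namely $\tfrac{m\pm 1}{2}$, the weight $2$ would lie in the isomorphism range only for $m=4$; for $m=3$ (the paper's main case) the root $\tfrac{m+1}{2}=2$ is resonant, and in general a homogeneous solution of order $x^{(m-1)/2}$, compatible with the boundary condition $v \to 0$, would block the conclusion $v = O(x^2)$. With the correct operator, the forcing $R_g + m(m+1) = \operatorname{tr}_g(\operatorname{Rc}_g + m g)$ --- note it is the \emph{trace} part of your hypothesis that drives this equation, not the traceless part as you wrote --- lies in $x^2 C^{l-2+\beta}_e(X)$, the weight $2 \in (-1, m+1)$ is admissible, and a short bootstrap on $Q(v)$ gives $w - 1 \in x^2 C^{l+\beta}_e(X)$ with no delicate resonance bookkeeping at $x^1$ needed at all. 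With this correction, the remainder of your outline (the $O(x^2)$ comparison of $\tilde x$ with $x$, the conformal transformation of the second fundamental form, and the reapplication of Lemma~\ref{calculation-AH-curvature} to the new metric) goes through.
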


%%%%%%%%%%%%%%%%%%%%%%%%%%%%%%%%%%%%%%%%%%%%%%%%%%
%%%%%%%%%%%%%%%%%%%%%%%%%%%%%%%%%%%%%%%%%%%%%%%%%%
\section{Short time existence for CFR on AH manifolds} \label{sect existence of CRF on AH mfld}

Based on our discussion of (\ref{initial-condition}) and the discussion of \cite{ACF} in 
\S \ref{subsec Holder space for CRF AH mflds},  from now on, 
we assume that the initial metric $g_0$ is $C^{4+\alpha}$ AH with constant scalar curvature 
$-m(m+1)$ and $\operatorname{Rc}_{g_0} + m g_0\in x^2C^{2+ \alpha}_e(X)$.

In this section we give a proof of Theorem \ref{thm short exist CRF on AH} in three steps.
We will first introduce and solve for a short time DCRF (short for DeTurck conformal Ricci flow). 
Then we convert the solution of DCRF to a solution of CRF.

 %%%%%%%%%%%%%%%%%%%%%%%

\subsection{DeTurk CRF and linearization}\label{subseq decomp evol op Ba11 trick}
Now we prepare for the application of the contraction mapping theorem to prove 
the short time existence of DCRF. Let $h_0 =g_0$.
The DCRF corresponding to CRF (\ref{eq crf g p orig general}) on a $C^{4+\alpha}$ 
AH manifold $(X^{m+1}, h_0)$ is 
\begin{equation}\label{eq dcrf}
\left\{\aligned 
& \partial_t h(t)  = -  2 \left (\operatorname{Rc}_{h(t)}+ m h(t) \right ) + \mathcal {L}_{W(t)} h(t)
 -2 \pi(t) h(t) \quad \text{on } X_T, \\
&  (\Delta_{h(t)}  -(m+1))\pi(t)  = - \frac{1}{m} |\operatorname{Rc}_{h(t)}+ m h(t) |_{h(t)}^2 
\quad \text{on } X_T, \\
& h(0) =h_0
\endaligned \right. 
\end{equation}
where vector field $W(t)$ is defined by 
\begin{equation}\label{vector field}
W^k(t) := h_0^{ij}  \left ( \Gamma_{ij}^k(h(t)) - \Gamma_{ij}^k (h_0)  \right ),
\end{equation}
$ \mathcal {L}_{W(t)}$ is the Lie derivative, and $\Gamma_{ij}^k$ is the Christoffel 
symbol of the corresponding metric.

For simplicity, in this section we use $\nabla$,  $\operatorname{Rm}$, and $\operatorname{Rc}$
 to denote the Levi-Civita
connection, Riemann curvature, and Ricci curvature of the metric $h_0$ respectively.
From \cite[(4.2)]{Ba11}  and \cite[Lemma 4.1, 4.2, 4.3]{Ba11} we have the following decomposition.
Let $h(t) = h_0 + u(\cdot, t) $ and let operator $L = \Delta_L^{h_0}+ 2m$, 
where $\Delta_L^{h_0}$ is the Lichnerowicz operator 
of $h_0$. We rewrite
\begin{equation}
 -  2 \left (\operatorname{Rc}_h+ m h \right )+ \mathcal {L}_{W} h = Lu+\mathcal{Q}(u) +\mathcal{E},
\label{eq Ba11 decomp of evol op}
\end{equation}
where operator
\begin{align*}
(\mathcal{Q} (u))_{ij} = & ((h_0 +u)^{kl}- h_0^{kl} ) {\nabla}_k {\nabla}_l u_{ij} 
+ (h_0+u)^{kl}(h_0+u)_{ip} (h_0) ^{pq}  {R}_{jklq} \\
& + (h_0+ u)^{kl} (h_0 +u)_{jp} (h_0)^{pq} {R}_{iklq} - 2 {R}_{ij} 
-2 {R}_{iklj}u_{kl} \\
& - {R}_{ik}u_{kj} - {R}_{jk}u_{ki} + (h_0 +u)^{-1} * (h_0+ u)^{-1}
* {\nabla} u * {\nabla}u, \\
 \mathcal{E} = &  2(\operatorname{Rc}_{h_0}+m h_0) \in x^2C^{2+ \alpha}_e(X).
\end{align*}
Note that $\mathcal{E}$ only depends of the initial metric $h_0$. 
The following is from  \cite[(4.4) and (4.5)]{Ba11}.
 
\begin{lemma}\label{Lem 4etc in Ba11}
We have
 \[
\mathcal{Q}(\cdot): x^{2} C_e^{2+\alpha, \frac\alpha 2}(X_T) \rightarrow x^{2}
 C_e^{\alpha, \frac \alpha 2}(X_T).
 \]
 Moreover
 \begin{align*}
&  \|\mathcal{Q}(u)\|_{x^{2}C^{\alpha, \frac {\alpha}2}_e(X_T)} \leq C \|u \|^2_
 {x^2 C^{2+\alpha, \frac{\alpha}2}_e(X_T)}, \quad \text{ and}  \\
  &  \| \mathcal{Q}(u) - \mathcal{Q}( \tilde{u}) \|_{ x^{2} C_e^{\alpha, \alpha /2}(X_T) }
   \leq C \max \{ \|u \|_{ x^{2}  C_e^{2+\alpha,  \alpha /2}(X_T) },  \|  \tilde{u}\|_{ x^{2} 
    C_e^{2+ \alpha, \alpha /2}(X_T)  } \} \\
 & ~\hskip 5.5cm \cdot \|u -  \tilde{u} \|_{ x^{2}  C_e^{2+\alpha,  \alpha /2}(X_T) }.
 \end{align*}
 \end{lemma}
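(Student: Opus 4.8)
The plan is to prove the mapping property and the two estimates for $\mathcal{Q}$ by examining the explicit expression term by term, using the algebra of the weighted H\"older spaces $x^\mu C_e^{k+\alpha,\frac{\alpha}{2}}(X_T)$ together with the curvature regularity established earlier. Recall that each summand in $(\mathcal{Q}(u))_{ij}$ is one of three types: (a) quadratic-or-higher contractions of $u$ against the fixed curvature tensors $\operatorname{Rm}$ and $\operatorname{Rc}$ of $h_0$, (b) the second-order term $((h_0+u)^{kl}-h_0^{kl})\nabla_k\nabla_l u_{ij}$, and (c) the gradient-quadratic term $(h_0+u)^{-1}*(h_0+u)^{-1}*\nabla u*\nabla u$. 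The key structural point is that $\mathcal{Q}(u)$ collects precisely those terms in the expansion of $-2(\operatorname{Rc}_h+mh)+\mathcal{L}_W h$ that vanish to second order in $u$, so that each summand carries either two factors of $u$ or one factor of $u$ multiplying a coefficient that is itself $O(x^2)$.

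First I would establish that the weighted spaces form an algebra under multiplication with the expected weight additivity, i.e. that a product of factors in $x^{\mu_1}C_e^{k+\alpha,\frac{\alpha}{2}}$ and $x^{\mu_2}C_e^{k+\alpha,\frac{\alpha}{2}}$ lands in $x^{\mu_1+\mu_2}C_e^{k+\alpha,\frac{\alpha}{2}}$, with the corresponding product estimate on the norms; this is a standard feature of these H\"older scales and I would cite it from \cite{Le06,Ba11}. Next I would check the target weight. Since $u\in x^2 C_e^{2+\alpha,\frac{\alpha}{2}}(X_T)$, the term $\nabla u*\nabla u$ in (c) is a product of two factors each of weight $2$ (the covariant derivative $\nabla$ with respect to the fixed metric $h_0$ preserves the weight up to the expected loss of one derivative, so $\nabla u\in x^2 C_e^{1+\alpha,\frac{\alpha}{2}}$), giving nominal weight $4$; this is absorbed into the required weight $2$ using the embedding $x^4 C_e^{\alpha,\frac{\alpha}{2}}\hookrightarrow x^2 C_e^{\alpha,\frac{\alpha}{2}}$. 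For the type-(a) terms, the cancellations of the zeroth-order pieces (encoded in the $-2R_{ij}$ and the Kulkarni--Nomizu structure) are what turn the naive $O(x^0)$ curvature coefficients into $O(x^2)$ coefficients via \eqref{eq ut Rm asymp behav}, so each such term is a product of an $x^2$ curvature factor with one or two factors of $u$, again landing in weight at least $2$.

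The quadratic estimate $\|\mathcal{Q}(u)\|_{x^2 C_e^{\alpha,\frac{\alpha}{2}}}\le C\|u\|^2_{x^2 C_e^{2+\alpha,\frac{\alpha}{2}}}$ then follows by applying the algebra estimate to each summand, noting that every term is genuinely at least quadratic in $u$ once the initial-metric pieces are removed; the only point requiring care is the inverse-metric factor $(h_0+u)^{-1}$, which I would expand as $h_0^{-1}-h_0^{-1}u\,h_0^{-1}+\cdots$ using the Neumann series, valid and uniformly bounded in the relevant norm once $\|u\|$ is small, so that $(h_0+u)^{-1}$ stays in a bounded neighborhood of $h_0^{-1}$ in $C_e^{2+\alpha,\frac{\alpha}{2}}$. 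The Lipschitz-type difference estimate is obtained in the same way by writing $\mathcal{Q}(u)-\mathcal{Q}(\tilde u)$ and telescoping each summand as a difference, factoring out $u-\tilde u$ and bounding the remaining factor by $\max\{\|u\|,\|\tilde u\|\}$; the polarization $a^2-b^2=(a-b)(a+b)$ handles the quadratic terms, and the difference of Neumann series for the inverse metrics handles term (b) and term (c). The main obstacle I anticipate is keeping the bookkeeping of weights and derivative orders consistent across all summands simultaneously, in particular verifying rigorously that the second-order term in (b), where $u$ appears undifferentiated in the coefficient $(h_0+u)^{kl}-h_0^{kl}$ but with two derivatives on the other factor, still closes in $x^2 C_e^{\alpha,\frac{\alpha}{2}}$ without any loss below the required regularity. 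This is exactly the delicate balance between the weight gained from the coefficient being $O(x^2)$ (or carrying a factor of $u$) and the two derivatives lost on $\nabla_k\nabla_l u$, and it is the reason the specific index $k=2$ with weight $2$ is chosen; once the algebra and embedding lemmas are in place, the estimate is a direct, if lengthy, verification, which is why it is attributed to \cite[(4.4),(4.5)]{Ba11}.
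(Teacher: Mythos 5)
The first thing to note is that the paper contains no proof of this lemma at all: it is quoted directly from \cite[(4.4), (4.5)]{Ba11} (together with Lemmas 4.1--4.3 there). So your proposal has to be measured against the direct term-by-term argument that the cited source carries out, and in outline --- multiplication/algebra property of the weighted H\"older scales, weight bookkeeping term by term, Neumann series for $(h_0+u)^{-1}$ on a small ball, and polarization/telescoping for the difference estimate --- your plan does match that argument.

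However, there is a genuine error in your structural analysis of the curvature terms, and it is not cosmetic. You assert that each summand of $\mathcal{Q}(u)$ carries ``either two factors of $u$ or one factor of $u$ multiplying a coefficient that is itself $O(x^2)$,'' and for the type-(a) terms you invoke \eqref{eq ut Rm asymp behav} to claim that the cancellations convert the $O(1)$ curvature coefficients into $O(x^2)$ ones. Both halves of this are wrong. A term of the form $T\ast u$ with $T$ a \emph{fixed} tensor in $x^{2}C_e^{\alpha,\frac{\alpha}{2}}$ obeys only a linear bound $\Vert T\ast u\Vert_{x^2C_e^{\alpha,\alpha/2}}\le C\Vert u\Vert$, which is strictly weaker than the claimed quadratic bound for small $u$ and is also incompatible with the factor $\max\{\Vert u\Vert,\Vert\tilde u\Vert\}$ in the Lipschitz estimate; since the quadratic smallness of $\mathcal{Q}$ is precisely what makes the contraction argument of Lemma \ref{lem Ba 4.5} close (one needs $\Vert v_1\Vert\lesssim \epsilon^2\le\epsilon/3$), such linear terms cannot be tolerated --- and indeed they are not present. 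Moreover, \eqref{eq ut Rm asymp behav} concerns $\operatorname{Rm}_{g_u}+g_u\owedge g_u$ for the evolving metric, whereas the coefficients appearing in $\mathcal{Q}$ are the \emph{un-subtracted} background curvatures $R_{ijkl}(h_0)$, $R_{ij}(h_0)$, which are merely bounded in $C^{\alpha}_e$, not $O(x^2)$; the decay hypothesis $\operatorname{Rc}_{h_0}+mh_0\in x^2C_e^{2+\alpha}$ is what controls $\mathcal{E}$ and $\hat{\mathcal{E}}$, and plays no role in this lemma. The step your sketch skips, and which is the actual content here, is the verification that $\mathcal{Q}$ vanishes to \emph{second} order in $u$: expand $(h_0+u)^{kl}(h_0+u)_{ip}h_0^{pq}R_{jklq}+(h_0+u)^{kl}(h_0+u)_{jp}h_0^{pq}R_{iklq}$ in powers of $u$ and check, using the curvature symmetries (for instance $u^{kl}R_{jkli}=u^{kl}R_{iklj}$ for symmetric $u$, via pair symmetry and the two antisymmetries) and the convention $h_0^{kl}R_{iklj}=R_{ij}$, that the zeroth-order part cancels $-2R_{ij}$ and the first-order part cancels the displayed terms linear in $u$. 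Only after this cancellation is every surviving summand genuinely quadratic with bounded coefficients, so that the two factors of $u\in x^2C_e$ produce weight $x^4\hookrightarrow x^2$ and the algebra property yields both estimates. With that cancellation established, the remainder of your argument (Neumann series --- noting that the lemma is implicitly used only for $u$ in a small ball, since $(h_0+u)^{-1}$ must exist --- and polarization for the difference bound) goes through as in \cite{Ba11}.
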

 
\vskip .1cm
Next we want to introduce the analogous decomposition for the pressure function $\pi(t)$
in (\ref{eq dcrf}). 
Let $\mathcal{P}$ be the operator defined in (\ref{eq cal P def}) and let
\begin{equation}
-2 \pi(t) h(t) = -2\mathcal{P}(h_0+u) \cdot  (h_0+u) 
=-2 \mathcal{P}(h_0)u+ \hat{\mathcal{Q}} (u) 
+ \hat{\mathcal{M}}(u) + \hat{\mathcal{E}}, \label{eq P(h +u) decomp}
\end{equation}
where
$$
\aligned
\hat{\mathcal{E}} & = -2 \mathcal{P}(h_0)h_0, \\
\hat{\mathcal{M}} (u)  & = -2 (\mathcal{P}(h_0+u)-\mathcal{P}(h_0) ) h_0,  \\
\hat{\mathcal{Q}}(u) & =  -2(\mathcal{P}(h_0+u)-\mathcal{P}(h_0) )u.
\\
\endaligned
$$ 

Note that $\hat{\mathcal{E}}$ depends only on the initial metric $h_0$.  
Based on Lemma \ref{isomorphism-function} we have the following property of 
$\hat {\mathcal{E}}$ which is analog to that of $\mathcal{E}$.

 \begin{lemma}\label{lem funct prop Hat E(u)} Suppose that $\operatorname{Rc}_{h_0} 
 + m h_0 \in x^2 C^{2+ \alpha}_e(X)$. 
 Then we have $\hat{\mathcal{E}} \in x^{4} C_e^{4+\alpha}(X)$ and 
 there is a constant $C >0$ such that 
 \[ 
 \|\hat{\mathcal{E}} \|_{ x^{4} C_e^{4+\alpha}(X)} \leq C \|\operatorname{Rc}_{h_0} 
 + m h_0 \|^2_{x^2 C^{2+ \alpha}_e(X)}.
 \]
\end{lemma}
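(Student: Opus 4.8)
The plan is to prove the two assertions about $\hat{\mathcal{E}} = -2\mathcal{P}(h_0)h_0$ by tracing the definition of $\mathcal{P}$ through the isomorphism and embedding results already established. Recall from \eqref{eq cal P def} that
\[
\mathcal{P}(h_0) = \frac{1}{m}(-\Delta_{h_0} + (m+1))^{-1}\bigl(|\operatorname{Rc}_{h_0} + m h_0|^2_{h_0}\bigr).
\]
So the first thing I would do is understand the weighted Hölder class of the source term $f := |\operatorname{Rc}_{h_0} + m h_0|^2_{h_0}$. By hypothesis $\operatorname{Rc}_{h_0} + m h_0 \in x^2 C^{2+\alpha}_e(X)$, and since taking the pointwise squared norm of a $2$-tensor is a quadratic contraction against the metric $h_0$ (whose components are controlled on the AH manifold), the weight doubles: I expect $f \in x^4 C^{2+\alpha}_e(X)$, with a bound $\|f\|_{x^4 C^{2+\alpha}_e(X)} \leq C\|\operatorname{Rc}_{h_0} + m h_0\|^2_{x^2 C^{2+\alpha}_e(X)}$. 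This is the algebra-of-weighted-Hölder-spaces step, and it should be routine once one checks that multiplication sends $x^{\mu_1}C^{k+\alpha}_e \times x^{\mu_2}C^{k+\alpha}_e$ into $x^{\mu_1+\mu_2}C^{k+\alpha}_e$ with the expected product estimate.

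Next I would apply the elliptic isomorphism Lemma \ref{isomorphism-function} with the operator $-\Delta_{h_0} + (m+1)$. With $k = 4$, so that $k - 2 + \alpha = 2 + \alpha$, and weight $\mu = 4$, I need to verify the admissibility conditions $k - 1 + \alpha = 3 + \alpha \leq l + \beta$ and $\mu = 4 \in (-1, m+1)$. Since the standing assumption is that $h_0$ is $C^{4+\alpha}$ AH (so $l + \beta = 4 + \alpha$) and $m \geq 3$ (so $m + 1 \geq 4$), both conditions are satisfied precisely at the boundary of the allowed range; I would flag that the weight condition $4 < m+1$ genuinely requires $m \geq 4$, or else $4 = m+1$ when $m = 3$ sits on the closed endpoint, and I would want to check carefully whether Lemma \ref{isomorphism-function} as stated (open interval $\mu \in (-1, m+1)$) actually covers $\mu = 4$ when $m = 3$. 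This is the main subtlety to watch. Assuming the lemma applies, the isomorphism gains two derivatives and preserves the weight, yielding
\[
\mathcal{P}(h_0) = \tfrac{1}{m}(-\Delta_{h_0} + (m+1))^{-1} f \in x^4 C^{4+\alpha}_e(X),
\]
with the corresponding operator-norm bound $\|\mathcal{P}(h_0)\|_{x^4 C^{4+\alpha}_e(X)} \leq C\|f\|_{x^4 C^{2+\alpha}_e(X)}$.

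Finally, I would multiply by $-2h_0$. Since $h_0$ is $C^{4+\alpha}$ AH, its own components are a bounded multiplier on the relevant weighted space, and multiplication by a fixed metric does not change the weight $x^4$; hence $\hat{\mathcal{E}} = -2\mathcal{P}(h_0)h_0 \in x^4 C^{4+\alpha}_e(X)$. Chaining the three estimates gives
\[
\|\hat{\mathcal{E}}\|_{x^4 C^{4+\alpha}_e(X)} \leq C\|\mathcal{P}(h_0)\|_{x^4 C^{4+\alpha}_e(X)} \leq C\|f\|_{x^4 C^{2+\alpha}_e(X)} \leq C\|\operatorname{Rc}_{h_0} + m h_0\|^2_{x^2 C^{2+\alpha}_e(X)},
\]
which is exactly the claimed bound. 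The one genuine obstacle, rather than the bookkeeping of weights, is confirming that the elliptic isomorphism is available at the weight $\mu = 4$ in the critical low-dimensional case $m = 3$; the quadratic weight-doubling and the multiplication estimates are otherwise entirely mechanical consequences of the weighted Hölder calculus and Lemma \ref{isomorphism-function}.
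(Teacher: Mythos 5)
Your argument is the same one the paper intends: the paper offers no proof of this lemma beyond the sentence preceding it (``Based on Lemma \ref{isomorphism-function} \dots''), and the intended content is exactly your three steps --- quadratic weight doubling of the source $|\operatorname{Rc}_{h_0}+mh_0|^2_{h_0}\in x^4C^{2+\alpha}_e(X)$, elliptic inversion at weight $4$ with a gain of two derivatives via Lemma \ref{isomorphism-function}, and multiplication by the edge-bounded tensor $h_0$.

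The subtlety you flag at $m=3$ is genuine, and it is a defect of the statement itself rather than of your bookkeeping. The endpoints of the interval $(-1,m+1)$ in Lemma \ref{isomorphism-function} are the indicial roots of $-\Delta+(m+1)$, and the isomorphism really does fail at $\mu=m+1$: when $m=3$ the source sits exactly at the critical weight $x^{m+1}=x^4$, and since the indicial polynomial has a simple root at $s=m+1$, the solution generically contains a term $c\,x^{4}\log x$ whose coefficient is proportional to the leading ($x^4$-order) coefficient of the source. Such a solution lies in $x^{4-\epsilon}C^{4+\alpha}_e(X)$ for every $\epsilon>0$ but not in $x^4C^{4+\alpha}_e(X)$; so for $m=3$ the conclusion as written is not only unobtainable from Lemma \ref{isomorphism-function} but generically false. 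The repair is harmless: for $m=3$ state the conclusion with weight $x^{4-\epsilon}$ (or even $x^2$), which your argument delivers verbatim by inverting at $\mu=4-\epsilon\in(-1,m+1)$, and which suffices for every later use of the lemma --- the proof of Lemma \ref{lem Ba 4.5} uses only $x^2$-weighted control of $\hat{\mathcal{E}}$, and the uniform degeneracy of $\hat{L}$ needs only $\mathcal{P}(h_0)\in xC^{2+\alpha}(\bar{X})$, into which $x^{4-\epsilon}C^{4+\alpha}_e(X)$ still embeds. With that adjustment your proof is complete for all $m\geq 3$; as written, it is complete for $m\geq 4$.
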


Based on Lemma \ref{lem Laplace invertible weighted timed}
 we have the following properties of $\hat{\mathcal{M}} (u)$, and properties
 of  $\hat {\mathcal{Q}}(u)$  which are analog to that of $\mathcal{Q}(u)$.

 \begin{lemma}\label{lem funct prop hat Q(u)} Suppose that 
 $\operatorname{Rc}_{h_0} + m h_0 \in x^2 C^{2+\alpha}_e (X)$ and that
 $u$ and $\tilde{u}$  are in some small balls centered at $0$ 
 in $x^2 C_e^{2+\alpha, \frac{\alpha}2}(X_T)$.
Then we have
 
 \vskip .1cm
\noindent $($i$)$  $\hat{\mathcal{M}}(u)  \in x^2 C_e^{2+\alpha, \frac{\alpha}2}(X_T)$
and there is a constant $C_1 >0$ such that 
\[
\| \hat{\mathcal{M}}(u)  \|_{ x^{2} C_e^{2+ \alpha,  \frac{\alpha}2}(X_T)} 
\leq C_1 \| u \|_{x^{2} C_e^{2+\alpha, \frac {\alpha}2}(X_T)};
\]

 \vskip .1cm
\noindent $($ii$)$  $\hat{\mathcal{Q}}(u)  \in x^2 C_e^{2+\alpha, \frac{\alpha}2}(X_T)$
and there is a constant $C_2 >0$ such that 
\[
\| \hat{\mathcal{Q}}(u)  \|_{ x^{2} C_e^{2 + \alpha,  \frac{\alpha}2}(X_T)} 
\leq C_2 \| u \|^2_{x^{2} C_e^{2+\alpha, \frac {\alpha}2}(X_T)};
\]

 \vskip .1cm
\noindent $($iii$)$ There is a constant $C_3 >0$ such that 
\begin{align*}
 &  \| \hat{\mathcal{Q}}(u) - \hat{\mathcal{Q}}(\tilde{u}) \|_{ x^{2} C_e^{2+\alpha,
  \frac{\alpha}2}(X_T) } 
 \leq C_3 \max \{ \| u \|_{ x^{2}  C_e^{2+\alpha,  \frac{\alpha}2}(X_T) },  \| \tilde{u} \|_{ x^{2} 
  C_e^{2+\alpha, \frac{\alpha}2}(X_T)  } \} \\
 & ~\hskip 5.5cm \cdot \| u- \tilde{u} \|_{ x^{2}  C_e^{2+\alpha,  \frac{\alpha}2}(X_T) }.
 \end{align*}
  \end{lemma}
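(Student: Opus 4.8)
The plan is to reduce all three assertions to the Lipschitz estimate for the pressure operator $\mathcal{P}$ supplied by Lemma \ref{lem Laplace invertible weighted timed}, combined with the multiplicative (algebra) structure of the weighted H\"older spaces. Throughout I would apply Lemma \ref{lem Laplace invertible weighted timed} with the indices $k=2$, $\nu = 2$, and $\mu = 2$, which are admissible since $\nu = 2 > 1 + \alpha = k-1+\alpha$ and $\mu = 2 \in (0, m+1)$ (recall $m \geq 3$). As $u$ and $\tilde u$ lie in small balls about $0$ in $x^2 C_e^{2+\alpha, \frac{\alpha}2}(X_T)$, and the lower-order spatial norm is dominated by the higher-order one, $\|u\|_{x^2 C_e^{1+\alpha, \frac{\alpha}2}(X_T)} \le \|u\|_{x^2 C_e^{2+\alpha, \frac{\alpha}2}(X_T)}$, shrinking the balls guarantees the smallness hypothesis $\|u\|_{x^2 C_e^{1+\alpha, \frac\alpha2}(X_T)} < \epsilon$ (and likewise for $\tilde u$) required by that lemma. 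Taking the background metric there to be $h_0$, part (ii) then yields the basic estimate
\[
\| \mathcal{P}(h_0+u) - \mathcal{P}(h_0+\tilde u) \|_{x^2 C_e^{2+\alpha, \frac\alpha2}(X_T)} \le C \, \| u - \tilde u \|_{x^2 C_e^{2+\alpha, \frac\alpha2}(X_T)},
\]
and, specializing to $\tilde u = 0$, the bound $\| \mathcal{P}(h_0+u) - \mathcal{P}(h_0) \|_{x^2 C_e^{2+\alpha, \frac\alpha2}(X_T)} \le C \|u\|_{x^2 C_e^{2+\alpha, \frac\alpha2}(X_T)}$. These are the only inputs from elliptic theory; everything else is weight bookkeeping.

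For (i), I would observe that $\mathcal{P}(h_0+u) - \mathcal{P}(h_0)$ is a scalar function lying in $x^2 C_e^{2+\alpha, \frac\alpha2}(X_T)$, while $h_0$, being the background metric, belongs to the unweighted tensor space $C_e^{2+\alpha}(X)$ (weight $0$). The multiplication property of the weighted H\"older spaces, under which the weights add, then gives $\hat{\mathcal{M}}(u) \in x^2 C_e^{2+\alpha, \frac\alpha2}(X_T)$ together with $\|\hat{\mathcal{M}}(u)\| \le C \|\mathcal{P}(h_0+u)-\mathcal{P}(h_0)\| \, \|h_0\| \le C_1 \|u\|_{x^2 C_e^{2+\alpha, \frac\alpha2}(X_T)}$, the claimed linear bound. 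For (ii) the same product structure applies, but now the second factor is the tensor $u \in x^2 C_e^{2+\alpha, \frac\alpha2}(X_T)$, so the weights add to $4$ and $\hat{\mathcal{Q}}(u) \in x^4 C_e^{2+\alpha, \frac\alpha2}(X_T)$. Since $x$ is bounded on $\overline X$, there is a continuous embedding $x^4 C_e^{2+\alpha, \frac\alpha2}(X_T) \hookrightarrow x^2 C_e^{2+\alpha, \frac\alpha2}(X_T)$, and combining this with the linear estimate above produces the quadratic bound $\|\hat{\mathcal{Q}}(u)\|_{x^2 C_e^{2+\alpha, \frac\alpha2}(X_T)} \le C_2 \|u\|^2_{x^2 C_e^{2+\alpha, \frac\alpha2}(X_T)}$.

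For the difference estimate (iii) I would use the telescoping decomposition
\[
\hat{\mathcal{Q}}(u) - \hat{\mathcal{Q}}(\tilde u) = -2(\mathcal{P}(h_0+u)-\mathcal{P}(h_0))(u-\tilde u) - 2(\mathcal{P}(h_0+u)-\mathcal{P}(h_0+\tilde u))\tilde u,
\]
and bound the two terms separately. The first is a product of the weight-$2$ function $\mathcal{P}(h_0+u)-\mathcal{P}(h_0)$ (controlled linearly by $\|u\|$) with $u - \tilde u$; the second is a product of the weight-$2$ function $\mathcal{P}(h_0+u)-\mathcal{P}(h_0+\tilde u)$ (controlled by $\|u-\tilde u\|$ via the basic estimate) with $\tilde u$. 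Applying the multiplication property and the embedding $x^4 \hookrightarrow x^2$ to each term exactly as in (ii), and collecting the factors of $\|u\|$, $\|\tilde u\|$, and $\|u-\tilde u\|$, yields the bound with the $\max\{\|u\|, \|\tilde u\|\}$ prefactor. The only genuine point of care — the place where a careless argument could fail — is the weight bookkeeping: one must confirm both that Lemma \ref{lem Laplace invertible weighted timed} places $\mathcal{P}(h_0+u)-\mathcal{P}(h_0)$ at weight $x^2$ (not lower), and that the extra factor of $x^2$ gained from multiplying by the tensor $u$ is precisely what upgrades the estimates for $\hat{\mathcal{Q}}$ from linear to quadratic. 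There is no further analytic difficulty, since all of the nonlinear elliptic work is already encapsulated in Lemma \ref{lem Laplace invertible weighted timed}.
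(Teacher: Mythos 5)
Your proposal is correct and takes essentially the same approach as the paper: the paper offers no detailed proof, asserting the lemma directly ``based on Lemma \ref{lem Laplace invertible weighted timed}'' (the Lipschitz estimate for $\mathcal{P}$ with $k=2$, $\nu=2$), which is precisely the key input you use. Your write-up simply makes explicit the weight bookkeeping, the algebra property of the weighted spaces, and the telescoping identity for $\hat{\mathcal{Q}}(u)-\hat{\mathcal{Q}}(\tilde u)$ that the authors leave to the reader.
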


%%%%%%%%%%%%%%%%%%%%%%%%%%%%%%%%%%%%%%%%%%%%%%%%
\subsection{Contraction mapping theorem for DCRF}\label{subsec DRF weak solvab}

In this subsection we will first prove the short time existence of solutions for DCRF that are 
$C^{1+\alpha}$ AH. 
 The Banach space we consider for the contraction mapping theorem is 
$x^2C^{2+\alpha, \frac {\alpha}2}_e(X_T)$.
For two positive parameters $\epsilon$ and $T$ to be specified below, 
we define a closed subset $Z_{\epsilon,T}$ as
\[
Z_{\epsilon,T} = \{ u \in  x^2 C^{2+\alpha, \frac \alpha 2}_e(X_T): \,  u(x, 0) = 0 \text{ and } 
\| u \|_{x^2 C^{2+\alpha, \frac {\alpha}2}_e (X_T )} \leq \epsilon \}.
\]

Let $h(t) =h_0 +  v(\cdot, t)$ in (\ref{eq dcrf}).
In the light of \eqref{eq Ba11 decomp of evol op} and \eqref{eq P(h +u) decomp}, 
we rewrite the DCRF as
$$
\left\{\aligned
(\partial_t - \hat L)v & = \mathcal{E}  + \mathcal{Q}(v) + \hat{\mathcal{E}} + 
\hat{\mathcal{Q}}(v) +\hat{\mathcal{M}}(v) \\
v(\cdot, 0) & = 0.
\endaligned\right.
$$
where $\hat{L} = \Delta_L^{h_0} + 2m - 2 \mathcal{P}(h_0)$.
Since $\mathcal{P}(h_0)\in x^4C^{4+\alpha }_e(X)\subset xC^{2+\alpha}(\bar X)$,
 $\hat{L} $ is uniformly degenerate elliptic. 
 We may apply  Lemma \ref{thm 3.2 in Ba11} to $\partial_t - \hat{L}$. 
 \\

Let us define the mapping as follows: given $u \in Z_{\epsilon, T} $,  
from the discussions in the previous subsection, we know
\begin{equation*}
 \mathcal{Q}(u)  + \hat{\mathcal{Q}}(u)+\mathcal{E}  + \hat{\mathcal{E}} +
 \hat{\mathcal{M}}(u)\in x^2C^{\alpha, \frac \alpha 2}_e(X_T).
\end{equation*}
 We may solve the linear equations by Lemma \ref{thm 3.2 in Ba11}
\begin{equation} \label{eq extra-vanishing}
\left\{\aligned
(\partial_t - \hat L)v & = \mathcal{Q}(u)+ \hat{\mathcal{Q}}(u) + \mathcal{E}  
+ \hat{\mathcal{E}}  +\hat{\mathcal{M}}(u) , \\
v(\cdot, 0) & = 0.
\endaligned\right.
\end{equation}
By the parabolic Schauder estimates in Lemma \ref{thm 3.2 in Ba11} we may define a map
$$
\Psi: Z_{\epsilon,T} \subseteq x^2 C^{2+\alpha, \frac{\alpha}2}_e(X_T)
\to x^2 C^{2+\alpha, \frac {\alpha}2}_e(X_T), \quad \Psi (u)=v_u =v.
$$
For appropriate choices of $\epsilon$ and $T$, we claim that the map
\begin{equation}\label{mapping}
\Psi: Z_{\epsilon,T} \to Z_{\epsilon,T},
\end{equation}
and that $\Psi$ is contractive.
In the following lemmas, we verify the above  claim and then apply the contraction mapping theorem 
to prove the short time existence 
for DCRF (\ref{eq dcrf}).

\begin{lemma} \label{lem Ba 4.5}
The mapping $\Psi$ maps $Z_{\epsilon,T}$ into $Z_{\epsilon,T}$ when $\epsilon$ and $T$ are small enough.
\end{lemma}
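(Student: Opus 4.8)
The plan is to show that $\Psi$ maps $Z_{\epsilon,T}$ into itself by combining the parabolic Schauder estimate of Lemma \ref{thm 3.2 in Ba11} with the bounds on the nonlinear terms established in Lemmas \ref{Lem 4etc in Ba11}, \ref{lem funct prop Hat E(u)}, and \ref{lem funct prop hat Q(u)}. Let me sketch how I would prove this.

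\textbf{Step 1 (apply the Schauder estimate).} Given $u \in Z_{\epsilon,T}$, the right-hand side of \eqref{eq extra-vanishing} lies in $x^2 C^{\alpha, \frac{\alpha}{2}}_e(X_T)$, so Lemma \ref{thm 3.2 in Ba11} (applied with $k=0$, $\mu=2$, to the operator $\hat L = \Delta_L^{h_0} + 2m - 2\mathcal{P}(h_0)$, which we have noted is uniformly degenerate elliptic) produces a unique solution $v = \Psi(u) \in x^2 C^{2+\alpha, \frac{2+\alpha}{2}}_e(X_T)$ satisfying $v(\cdot,0)=0$ together with
\[
\| v \|_{x^2 C^{2+\alpha, \frac{2+\alpha}{2}}_e(X_T)} \leq C(T) \, \big\| \mathcal{Q}(u) + \hat{\mathcal{Q}}(u) + \mathcal{E} + \hat{\mathcal{E}} + \hat{\mathcal{M}}(u) \big\|_{x^2 C^{\alpha, \frac{\alpha}{2}}_e(X_T)},
\]
where $C(T)$ stays bounded as $T \to 0$. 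In particular $v \in x^2 C^{2+\alpha, \frac{\alpha}{2}}_e(X_T)$, so $\Psi$ is well-defined into the correct Banach space.

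\textbf{Step 2 (estimate the right-hand side).} I would bound the source term by triangle inequality. The quadratic terms satisfy $\|\mathcal{Q}(u)\| \leq C\|u\|^2 \leq C\epsilon^2$ by Lemma \ref{Lem 4etc in Ba11} and $\|\hat{\mathcal{Q}}(u)\| \leq C_2\|u\|^2 \leq C_2\epsilon^2$ by Lemma \ref{lem funct prop hat Q(u)}(ii); the linear term satisfies $\|\hat{\mathcal{M}}(u)\| \leq C_1\|u\| \leq C_1\epsilon$ by Lemma \ref{lem funct prop hat Q(u)}(i); and the constant terms $\mathcal{E}, \hat{\mathcal{E}}$ depend only on $h_0$, with $\|\mathcal{E}\|_{x^2 C^{\alpha,\frac{\alpha}{2}}_e(X_T)}$ and $\|\hat{\mathcal{E}}\|$ bounded in terms of $\|\operatorname{Rc}_{h_0}+mh_0\|_{x^2 C^{2+\alpha}_e(X)}$ (using Lemma \ref{lem funct prop Hat E(u)} and the embedding $x^4 C^{4+\alpha}_e(X) \hookrightarrow x^2 C^{\alpha,\frac{\alpha}{2}}_e(X_T)$). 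Combining, I obtain $\|\Psi(u)\| \leq C(T)\,(C_0 + C_1\epsilon + C\epsilon^2)$, where $C_0$ collects the fixed data terms.

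\textbf{The main obstacle} is that $C_0$ does not shrink with $\epsilon$, so shrinking $\epsilon$ alone cannot force $\|\Psi(u)\| \leq \epsilon$; the decay must come from smallness of $T$. The crucial point is that $\mathcal{E}$ and $\hat{\mathcal{E}}$, while independent of $u$, vanish at $t=0$ \emph{only after} one uses the parabolic structure: the solution $v$ they generate satisfies $v(\cdot,0)=0$, and for a source in $x^2 C^{\alpha,\frac{\alpha}{2}}_e$ the resulting $x^2 C^{2+\alpha,\frac{\alpha}{2}}_e$-norm of $v$ over the short interval $[0,T]$ tends to $0$ as $T \to 0$ (this is where the ``$C(T)$ bounded for small $T$'' in Lemma \ref{thm 3.2 in Ba11}, refined to decay, is essential). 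I would therefore first choose $T$ small enough that the contribution of the fixed terms $\mathcal{E}+\hat{\mathcal{E}}$ is at most $\epsilon/2$, then choose $\epsilon$ small enough (and possibly $T$ smaller still) that $C(T)(C_1\epsilon + C\epsilon^2) \leq \epsilon/2$, yielding $\|\Psi(u)\|_{x^2 C^{2+\alpha,\frac{\alpha}{2}}_e(X_T)} \leq \epsilon$ and hence $\Psi(u)\in Z_{\epsilon,T}$. Verifying that the linear term $\hat{\mathcal{M}}(u)$ can indeed be absorbed (since its coefficient $C_1$ is fixed, this genuinely requires $C(T)C_1 < 1/2$, i.e. $T$ small) is the delicate bookkeeping I would carry out carefully.
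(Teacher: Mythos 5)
Your setup (apply Lemma \ref{thm 3.2 in Ba11} to \eqref{eq extra-vanishing}, bound the quadratic terms by $C\epsilon^{2}$) is consistent with the paper, and you correctly identify the crux: the contributions of $\mathcal{E}+\hat{\mathcal{E}}$ (independent of $\epsilon$) and of $\hat{\mathcal{M}}(u)$ (linear in $\epsilon$ with a fixed constant) can only be made small through $T$. But the mechanism you invoke for this is a genuine gap. You assume that for a source merely in $x^{2}C^{\alpha,\frac{\alpha}{2}}_e(X_T)$ the solution's $x^{2}C^{2+\alpha,\frac{\alpha}{2}}_e(X_T)$-norm tends to $0$ as $T\to 0$, i.e.\ that the constant $C(T)$ of Lemma \ref{thm 3.2 in Ba11} can be ``refined to decay.'' The lemma only asserts that $C(T)$ stays \emph{bounded} for small $T$, and no decay at the same H\"older exponent is possible: under parabolic rescaling $\tilde v(x,t)=\lambda^{2}v(x/\lambda,t/\lambda^{2})$, $\tilde f(x,t)=f(x/\lambda,t/\lambda^{2})$, the time interval shrinks to $[0,\lambda^{2}]$ while the ratio of the top-order seminorms $[\nabla^{2}\tilde v]_{\alpha}/[\tilde f]_{\alpha}$ is unchanged, so the best constant is bounded below uniformly in $T$. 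Consequently your final step, choosing $T$ so that $C(T)C_{1}<1/2$, cannot be carried out, and the same problem defeats your treatment of the fixed terms $\mathcal{E}+\hat{\mathcal{E}}$.

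The paper obtains the smallness in $T$ by a different route, and this is precisely why Lemmas \ref{lem funct prop Hat E(u)} and \ref{lem funct prop hat Q(u)}(i) record \emph{extra regularity}: $\mathcal{E}\in x^{2}C^{2+\alpha}_e(X)$, $\hat{\mathcal{E}}\in x^{4}C^{4+\alpha}_e(X)$, and $\hat{\mathcal{M}}(u)\in x^{2}C^{2+\alpha,\frac{\alpha}{2}}_e(X_T)$ --- two more spatial derivatives than you used when you dumped everything into $x^{2}C^{\alpha,\frac{\alpha}{2}}_e$. The paper splits $v=v_{1}+v_{2}+v_{3}$ (quadratic, fixed, and linear sources separately) and applies Lemma \ref{thm 3.2 in Ba11} at the level $k=2$ to $v_{2}$ and $v_{3}$, so that $v_{2},v_{3}$ lie in $x^{2}C^{4+\alpha,\cdot}_e$ and hence $\hat L v_{2},\hat L v_{3}$ are controlled in $x^{2}C^{2+\alpha,\frac{\alpha}{2}}_e$; then the identity
\[
v_{i}(\cdot,t)=\int_{0}^{t}\bigl(\text{source}+\hat L v_{i}\bigr)\,ds
\]
gives $\|v_{i}\|_{x^{2}C^{2+\alpha,\frac{\alpha}{2}}_e(X_T)}\leq CT^{1-\frac{\alpha}{2}}(\cdots)$, because the integrand is bounded in the full target spatial regularity $C^{2+\alpha}$ and integration in time gains a factor $T$ (respectively $T^{1-\alpha/2}$ for the time-H\"older seminorm). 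The smallness in $T$ is therefore not a property of the Schauder constant but of the sources' extra smoothness, which lets one replace the parabolic estimate by a fundamental-theorem-of-calculus bound; only the quadratic terms $\mathcal{Q}(u)+\hat{\mathcal{Q}}(u)$, for which no such gain is available, are handled by smallness of $\epsilon$, as you did. To repair your proof you must exploit that extra regularity (or switch to a weaker H\"older exponent and interpolate, a genuinely different argument); as written, the key step is unjustified and in fact false.
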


\begin{proof} We may write $v=v_u = v_{u1} + v_{u2} + v_{u3} = v_1 + v_2 + v_3$, where
$$
\left\{ \aligned (\partial_t - \hat{L}) v_1 & = \mathcal{Q}(u) + \hat{\mathcal{Q}}(u), \\
v_1(\cdot, 0) & = 0,
\endaligned\right.
$$
$$
\left\{\aligned (\partial_t - \hat{L}) v_2 & =\mathcal{E} + \hat{\mathcal{E}},  \\
v_2(\cdot, 0) & = 0,
\endaligned\right.
$$
and
$$
\left\{\aligned (\partial_t - \hat{L}) v_3 & =\hat{\mathcal{M}}(u), \\
v_3(\cdot, 0) & = 0.
\endaligned\right.
$$

Due to the quadratic nature of the operators $\mathcal{Q}$ and $\tilde{\mathcal{Q}}$, 
as in the proof of \cite[Lemma 4.5]{Ba11}, we have
$$
\| v_1 \|_{x^2 C^{2+\alpha, \frac {2+\alpha}2}_e (X_T )} \leq \frac 13 \epsilon,
$$  
where  $T$ is arbitrarily given and small and $\epsilon$ is small enough. 

For $v_2$, by Lemma \ref{lem funct prop Hat E(u)}  and \ref{thm 3.2 in Ba11}
we have 
\[
\| v_2 \|_{x^2 C^{4+ \alpha, \frac{4+ \alpha }{2}}_e(X_T)}  \leq C \|\operatorname{Rc}_{h_0} 
 + m h_0 \|_{x^2 C^{2+ \alpha}_e(X)} ,
\]
this implies that 
\begin{equation}\label{eq L hat v 2 est}
\|\hat{L} v_2 \|_{x^2 C^{2+ \alpha, \frac{ \alpha }{2}}_e(X_T)}  \leq C \|\operatorname{Rc}_{h_0} 
 + m h_0 \|_{x^2 C^{2+ \alpha}_e(X)}.
\end{equation}
As in the proof of \cite[Lemma 4.5]{Ba11}, we may write 
$$
v_2 (\cdot, t) = \int_0^t (\mathcal{E} + \hat{\mathcal{E}} + \hat L v_2)ds,
$$
by (\ref{eq L hat v 2 est}) and Lemma \ref{lem funct prop Hat E(u)} we get 
$$
\|v_2\|_{x^2 C^{2+\alpha, \frac \alpha 2}(X_T)} \leq C T^{1-\frac \alpha 2}
 \|\operatorname{Rc}_{h_0} + m h_0\|_{x^2 C^{2+\alpha}_e(X)}
\leq \frac 13 \epsilon
$$
when $T$ is chosen sufficiently small.

For $v_3$, the argument is similar to that of $v_2$. By Lemma \ref{lem funct prop hat Q(u)}(i) 
and \ref{thm 3.2 in Ba11},  we have
\[
\|v_3 \|_{x^2 C^{4+ \alpha,  \frac{2+\alpha}{2}}_e(X_T)}  \leq C \|u \|_{x^2 C^{2+ \alpha, 
\frac{\alpha }{2} }_e(X_T)},
\]
hence
\begin{equation}\label{eq L hat v 3 est}
\|\hat{L} v_3 \|_{x^2 C^{2+ \alpha,  \frac{\alpha }{2} }_e(X_T)}  \leq C \|u \|_{x^2 C^{2+ \alpha,
\frac{\alpha }{2} }_e(X_T)}.
\end{equation}
We write 
$$
v_3(\cdot, t) = \int_0^t  (\hat{\mathcal{M}}(u) + \hat{L} v_3)ds,
$$ 
and get
$$
\|v_3\|_{x^2 C^{2+\alpha, \frac \alpha 2}_e(X_T)} \leq C T^{1 - \frac 
\alpha 2}\|u\|_{x^2 C^{2+\alpha, \frac{\alpha }{2}}_e(X_T)} \leq \frac 13 \epsilon
$$
when $T$ is chosen sufficiently  small. Thus the proof is complete.
\end{proof}

%%%%%%%%
\begin{lemma} \label{lem Ba 4.6}  
The map $\Psi: Z_{\epsilon, T} \to Z_{\epsilon, T}$ is contractive when $\epsilon$ and $T$ are small.
\end{lemma}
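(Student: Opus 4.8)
The plan is to show that the map $\Psi$ is a contraction on $Z_{\epsilon, T}$ by estimating $\|\Psi(u) - \Psi(\tilde u)\|$ in terms of $\|u - \tilde u\|$, exploiting the fact that the constant picked up from the inhomogeneity can be made small by shrinking $T$. First I would take $u, \tilde u \in Z_{\epsilon, T}$ and write $v = \Psi(u)$, $\tilde v = \Psi(\tilde u)$. Since both solve the linear equation $(\partial_t - \hat L) v = \mathcal{Q}(u) + \hat{\mathcal{Q}}(u) + \mathcal{E} + \hat{\mathcal{E}} + \hat{\mathcal{M}}(u)$ with zero initial data, their difference $w := v - \tilde v$ satisfies
\[
(\partial_t - \hat L) w = \left(\mathcal{Q}(u) - \mathcal{Q}(\tilde u)\right) + \left(\hat{\mathcal{Q}}(u) - \hat{\mathcal{Q}}(\tilde u)\right) + \left(\hat{\mathcal{M}}(u) - \hat{\mathcal{M}}(\tilde u)\right), \qquad w(\cdot, 0) = 0,
\]
where the $\mathcal{E}$ and $\hat{\mathcal{E}}$ terms cancel because they depend only on $h_0$.

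Next I would estimate the right-hand side in $x^2 C_e^{\alpha, \frac{\alpha}{2}}(X_T)$ using the difference (Lipschitz-type) estimates already established: the bound on $\|\mathcal{Q}(u) - \mathcal{Q}(\tilde u)\|$ from Lemma \ref{Lem 4etc in Ba11}, the bound on $\|\hat{\mathcal{Q}}(u) - \hat{\mathcal{Q}}(\tilde u)\|$ from Lemma \ref{lem funct prop hat Q(u)}(iii), and the linearity of $\hat{\mathcal{M}}$ in $u$, which gives $\|\hat{\mathcal{M}}(u) - \hat{\mathcal{M}}(\tilde u)\| = \|\hat{\mathcal{M}}(u - \tilde u)\| \leq C_1 \|u - \tilde u\|$ by Lemma \ref{lem funct prop hat Q(u)}(i). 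The first two of these carry a factor $\max\{\|u\|, \|\tilde u\|\} \leq \epsilon$, so they contribute a prefactor of order $\epsilon \cdot \|u - \tilde u\|$, while the $\hat{\mathcal{M}}$ term contributes $C_1 \|u - \tilde u\|$ with no smallness from $\epsilon$.

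The key point, as in the proof of Lemma \ref{lem Ba 4.5}, is that the $\hat{\mathcal{M}}$ contribution must be controlled by $T$ rather than by $\epsilon$. I would split $w = w_1 + w_2$, where $w_1$ absorbs the $\mathcal{Q}$ and $\hat{\mathcal{Q}}$ differences and $w_2$ absorbs the $\hat{\mathcal{M}}$ difference. For $w_1$ the parabolic Schauder estimate of Lemma \ref{thm 3.2 in Ba11} directly yields $\|w_1\|_{x^2 C_e^{2+\alpha, \frac{\alpha}{2}}(X_T)} \leq C\epsilon \|u - \tilde u\|_{x^2 C_e^{2+\alpha, \frac{\alpha}{2}}(X_T)}$, which is small for $\epsilon$ small. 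For $w_2$ I would repeat the time-integration trick: apply Lemma \ref{thm 3.2 in Ba11} to get a bound on $\|\hat L w_2\|_{x^2 C_e^{2+\alpha, \frac{\alpha}{2}}(X_T)}$, then write $w_2(\cdot, t) = \int_0^t (\hat{\mathcal{M}}(u) - \hat{\mathcal{M}}(\tilde u) + \hat L w_2)\, ds$ so that integrating over $[0, t]$ produces a factor $T^{1 - \frac{\alpha}{2}}$, giving $\|w_2\|_{x^2 C_e^{2+\alpha, \frac{\alpha}{2}}(X_T)} \leq C T^{1 - \frac{\alpha}{2}} \|u - \tilde u\|_{x^2 C_e^{2+\alpha, \frac{\alpha}{2}}(X_T)}$.

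Combining these, $\|\Psi(u) - \Psi(\tilde u)\| \leq (C\epsilon + C T^{1 - \frac{\alpha}{2}}) \|u - \tilde u\|$ in the $x^2 C_e^{2+\alpha, \frac{\alpha}{2}}(X_T)$ norm, and choosing $\epsilon$ and $T$ small makes the prefactor strictly less than $1$. The main obstacle I anticipate is matching the Hölder exponents in time: the difference estimates for $\mathcal{Q}, \hat{\mathcal{Q}}, \hat{\mathcal{M}}$ land in the low-time-regularity space $x^2 C_e^{\alpha, \frac{\alpha}{2}}$, so one cannot bound the $\hat{\mathcal{M}}$ difference by $T$ directly from Schauder estimates without the intermediate time-integration step that trades the extra spatial regularity ($C^{4+\alpha}$ or the $\frac{2+\alpha}{2}$ time exponent) for the gain $T^{1 - \frac{\alpha}{2}}$. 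Care is needed to ensure the smallness constants from Lemma \ref{lem Laplace invertible weighted timed} and Lemma \ref{lem funct prop hat Q(u)} remain valid on the chosen ball, but this is already guaranteed since $Z_{\epsilon, T}$ sits inside that ball for $\epsilon$ small.
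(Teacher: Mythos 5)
Your proposal follows essentially the same route as the paper's proof: you split $\Psi(u)-\Psi(\tilde u)$ so that the $\mathcal{E}+\hat{\mathcal{E}}$ contribution cancels (the paper's $v_{u2}-v_{\tilde{u}2}=0$), you handle the $\mathcal{Q}$ and $\hat{\mathcal{Q}}$ differences by the Lipschitz estimates of Lemma \ref{Lem 4etc in Ba11} and Lemma \ref{lem funct prop hat Q(u)}(iii) with the $\epsilon$ prefactor (the paper's $v_{u1}-v_{\tilde{u}1}$ term), and you treat the $\hat{\mathcal{M}}$ difference by the same time-integration trick used for $v_3$ in Lemma \ref{lem Ba 4.5}, producing the factor $T^{1-\frac{\alpha}{2}}$ (the paper's $v_{u3}-v_{\tilde{u}3}$ term).

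The one genuine flaw is your claim that $\hat{\mathcal{M}}$ is linear in $u$, so that $\hat{\mathcal{M}}(u)-\hat{\mathcal{M}}(\tilde u)=\hat{\mathcal{M}}(u-\tilde u)$ and Lemma \ref{lem funct prop hat Q(u)}(i) applies. This is false: $\hat{\mathcal{M}}(u)=-2\left(\mathcal{P}(h_0+u)-\mathcal{P}(h_0)\right)h_0$, and $\mathcal{P}$ is a nonlinear operator --- it inverts $-\Delta_{g_u}+(m+1)$, whose coefficients depend on $u$, applied to the quadratic expression $|\operatorname{Rc}_{g_u}+mg_u|^2$ --- so $\hat{\mathcal{M}}$ is not additive in $u$. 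The Lipschitz bound you need,
\[
\|\hat{\mathcal{M}}(u)-\hat{\mathcal{M}}(\tilde u)\|_{x^2C_e^{2+\alpha,\frac{\alpha}{2}}(X_T)}
=2\left\|\left(\mathcal{P}(h_0+u)-\mathcal{P}(h_0+\tilde u)\right)h_0\right\|_{x^2C_e^{2+\alpha,\frac{\alpha}{2}}(X_T)}
\leq C\|u-\tilde u\|_{x^2C_e^{2+\alpha,\frac{\alpha}{2}}(X_T)},
\]
is nevertheless true, but its correct source is the difference estimate for $\mathcal{P}$ in Lemma \ref{lem Laplace invertible weighted timed}(ii), which is exactly what the paper invokes at this point (and which you cite only in passing, regarding smallness of the balls). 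With that substitution your argument closes and coincides with the paper's.
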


\begin{proof} Adopt the notations used in the proof of Lemma \ref{lem Ba 4.5}, 
for $u, \tilde{u} \in Z_{\epsilon, T}$ it is easy to see that $v_{u2} - v_{\tilde{u}2} =0$.

$v_{u1} - v_{\tilde{u}1}$ satisfies
$$
\left\{ \aligned 
& (\partial_t - \hat{L}) (v_{u1} - v_{\tilde{u}1}) =  \mathcal{Q}(u ) - \mathcal{Q}( \tilde{u}) 
 + \hat{\mathcal{Q}}(u )    - \hat{\mathcal{Q}}( \tilde{u}) ,  \\
& (v_{u1} - v_{\tilde{u}1})(\cdot, 0) = 0.
\endaligned\right.
$$
As in the proof of \cite[Lemma 4.6]{Ba11} and in the light of Lemma \ref{Lem 4etc in Ba11} 
and \ref{lem funct prop hat Q(u)}(iii), we have
\[
\|v_{u1} - v_{\tilde{u}1 } \|_{x^2 C^{2+\alpha, \frac \alpha 2}_e(X_T)} \leq \frac 13
 \|u - \tilde{u}  \|_{x^2 C^{2+\alpha, \frac \alpha 2}_e(X_T)}.
\]

Note that $v_{u3} - v_{\tilde{u}3}$ satisfies
$$
\left\{ \aligned 
& (\partial_t - \hat{L}) (v_{u3} - v_{\tilde{u}3}) = \hat{\mathcal{M}}(u )
    - \hat{\mathcal{M}}( \tilde{u}),   \\
& (v_{u3} - v_{\tilde{u}3})(\cdot, 0) = 0.
\endaligned\right.
$$
Since $$
\hat{\mathcal{M}}(u ) - \hat{\mathcal{M}}( \tilde{u}) = -2 (\mathcal{P}(h_0 + u)
 - \mathcal{P}(h_0+ \tilde{u} ))h_0,
$$
using the estimate in Lemma \ref{lem Laplace invertible weighted timed}(ii) we can adopt the
argument for the estimate of $v_3$ in the proof of  Lemma \ref{lem Ba 4.5} to get
$$
\|v_{u3} - v_{\tilde{u}3 } \|_{x^2 C^{2+\alpha, \frac \alpha 2}_e(X_T)} \leq 
C T^{1- \frac{\alpha}{2}}  \|u - \tilde{u}  \|_{x^2 C^{2+\alpha, \frac{\alpha}{2}}_e(X_T)}
\leq \frac 13
 \|u - \tilde{u}  \|_{x^2 C^{2+\alpha, \frac \alpha 2}_e(X_T)},
$$
when $T$ is chosen sufficiently  small.

It follows that map $\Psi$ is  contractive when  $\epsilon$ and $T$ are small enough.  
 \end{proof} 

Now let us summarize and state a short time existence theorem for DCRF.

\begin{theorem}\label{DCRF-short-time} 
Suppose that $(X^{m+1}, h_0)$ is $C^{4+\alpha}$ AH with 
constant scalar curvature $-m(m+1)$
and that $x$ is a geodesic defining function. Assume that $\operatorname{Rc}_{h_0} + mh_0$
 $\in x^2 C^{2+\alpha}_e(X)$. Then, for some small $T$, DCRF
\eqref{eq dcrf} has a solution $h(t) = h_0 + v(\cdot, t)$ such that $h(t)$ is $C^{1+\alpha}$
 AH with constant scalar curvature $-m(m+1)$ and $v \in 
x^2C^{2+\alpha, \frac {2+\alpha}2}_e(X_T)$.
\end{theorem}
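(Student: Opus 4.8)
The plan is to produce $v$ as the unique fixed point of the map $\Psi$ by Banach's contraction mapping theorem, and then to verify that this fixed point yields a genuine DCRF solution with the asserted regularity and constant scalar curvature. Since $Z_{\epsilon,T}$ is a closed subset of the Banach space $x^2 C^{2+\alpha, \frac{\alpha}2}_e(X_T)$, hence a complete metric space, and since Lemma \ref{lem Ba 4.5} gives $\Psi(Z_{\epsilon,T}) \subseteq Z_{\epsilon,T}$ while Lemma \ref{lem Ba 4.6} gives that $\Psi$ is contractive, I would fix $\epsilon$ and $T$ small enough to satisfy both lemmas simultaneously; the contraction mapping theorem then supplies a unique $v \in Z_{\epsilon,T}$ with $\Psi(v) = v$ and $v(\cdot,0) = 0$.

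Next I would confirm that $h(t) = h_0 + v(\cdot, t)$ solves \eqref{eq dcrf}. By construction of $\Psi$ the fixed point satisfies $(\partial_t - \hat L)v = \mathcal{Q}(v) + \hat{\mathcal{Q}}(v) + \mathcal{E} + \hat{\mathcal{E}} + \hat{\mathcal{M}}(v)$ with $\hat L = \Delta_L^{h_0} + 2m - 2\mathcal{P}(h_0)$. Setting $\pi(t) := \mathcal{P}(h_0 + v)$ and unwinding the decompositions \eqref{eq Ba11 decomp of evol op} and \eqref{eq P(h +u) decomp}, the right-hand side reassembles exactly into $-2(\operatorname{Rc}_{h} + m h) + \mathcal{L}_W h - 2\pi h$, so $h(t)$ solves the first equation of \eqref{eq dcrf}; that $\pi = \mathcal{P}(h_0+v)$ satisfies the pressure equation is immediate from the definition \eqref{eq cal P def}, and $v(\cdot,0)=0$ gives $h(0)=h_0$.

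For the regularity, a priori one only has $v \in x^2 C^{2+\alpha, \frac{\alpha}2}_e(X_T)$. To upgrade the time regularity I would note that the source $F := \mathcal{Q}(v) + \hat{\mathcal{Q}}(v) + \mathcal{E} + \hat{\mathcal{E}} + \hat{\mathcal{M}}(v)$ lies in $x^2 C^{\alpha, \frac{\alpha}2}_e(X_T)$ by Lemmas \ref{Lem 4etc in Ba11}, \ref{lem funct prop Hat E(u)} and \ref{lem funct prop hat Q(u)} (using that $\mathcal{E}$ and $\hat{\mathcal{E}}$ are time-independent and lie in better spatial spaces). Since $\hat L$ has time-independent, uniformly degenerate elliptic coefficients, reapplying the parabolic Schauder estimate of Lemma \ref{thm 3.2 in Ba11} (with $k=0$, $\mu=2$) to $(\partial_t - \hat L)v = F$, $v(\cdot,0)=0$, promotes $v$ to $x^2 C^{2+\alpha, \frac{2+\alpha}2}_e(X_T)$. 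Then the embedding consequence of Lemma \ref{Lemma 3.7-Lee} shows $h(t)=h_0+v(\cdot,t)$ is $C^{1+\alpha}$ AH, since $h_0$ is $C^{4+\alpha}$ AH (hence $C^{1+\alpha}$ AH) and the weight $\mu=2 \geq 1+\alpha$; the weight is too small to reach $C^{2+\alpha}$ AH, consistent with the statement.

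It remains to show $R_{h(t)} \equiv -m(m+1)$, which I expect to be the main obstacle, since the preceding steps are almost mechanical while this requires an exact algebraic cancellation and a maximum principle on a noncompact manifold. The plan is to compute the evolution of $\rho(t) := R_{h(t)} + m(m+1)$ under \eqref{eq dcrf}. Using the standard variation formula for scalar curvature together with the contracted second Bianchi identity, the pressure equation in \eqref{eq dcrf} is precisely engineered (as in \cite{Fi04, LQZ}) to cancel the zeroth-order source terms, while the DeTurck term $\mathcal{L}_W h$ contributes only the transport term $\langle W, \nabla \rho\rangle$; hence $\rho$ satisfies a linear homogeneous parabolic equation with $\rho(\cdot,0)=0$ and $\rho \to 0$ at the conformal infinity. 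A maximum principle argument, valid here because $\rho$ decays at infinity as encoded in the weighted H\"older spaces, then forces $\rho \equiv 0$, giving $R_{h(t)} = -m(m+1)$ for all $t \in [0,T]$ and completing the proof.
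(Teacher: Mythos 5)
Your proposal is correct and follows essentially the same route as the paper: the fixed point of $\Psi$ via Lemmas \ref{lem Ba 4.5} and \ref{lem Ba 4.6}, the reassembly of the decompositions \eqref{eq Ba11 decomp of evol op} and \eqref{eq P(h +u) decomp} with $\pi=\mathcal{P}(h_0+v)$, and the automatic upgrade of $v$ to $x^2C^{2+\alpha,\frac{2+\alpha}2}_e(X_T)$ by viewing the fixed-point equation \eqref{eq extra-vanishing} through Lemma \ref{thm 3.2 in Ba11} --- the last point being the only step the paper's proof explicitly records. Your treatment of the constant scalar curvature (the evolution equation for $\rho=R_{h(t)}+m(m+1)$, with the pressure equation cancelling the zeroth-order source, the DeTurck term contributing only $\langle W,\nabla\rho\rangle$, and a maximum principle using the decay of $\rho$ at infinity) is the standard verification that the paper leaves implicit, deferring to \cite{Fi04, LQZ}, and it is sound.
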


\proof The only thing we need to point out is that, for a fixed point $v=u \in x^2C^{2+\alpha, 
\frac \alpha 2}_e(X_T)$, it is automatic that
$v \in x^2 C^{2+\alpha, \frac {2+\alpha}2}_e(X_T)$ due to the equation 
(\ref{eq extra-vanishing}).
\endproof
 
%%%%%%%%%%%%%%%%%%%%%%%%%%%%%%%%%%%%%%%%%%%%%%%%
\subsection{Short time existence for CRF on AH manifolds}
\label{subsec exist CRF AH} To construct CRF from DCRF, 
we considers the family of diffeomorphisms $\varphi (t)$
generated by the vector field $W(t)$:
$$
\frac d{dt} \varphi (t) = W(t) \quad \text{ and } \varphi(0) = \operatorname{id},
$$
where $W(t)$ is defined in \eqref{vector field}. Note that $W \in x C^\alpha
 (\overline{X_T})\bigcap x^2 C^{1+\alpha, \frac {1+\alpha}2}_e(X_T)$.
Let $h(t)$ be the solution of DCRF from Theorem \ref{DCRF-short-time}. Then $g (t) = 
(\varphi(t))^* h(t) = g_0 + u(\cdot, t)$ solves the CRF for short time and  
$u \in x^2 C^{2+\alpha, \frac {2+\alpha}2}_e(X_T)$ (cf. 
 \cite[Lemma 3.1]{LQZ}, for instance). Moreover, $g(t)$ is $C^{1+\alpha}$ AH with 
constant scalar curvature $-m(m+1)$.

%%%%%%%%%%%%%%%%%%%%%%%%%%%%%%%%%%%%%%%%%%%%%%%%

\section{Shi's curvature derivative estimates for CRF}
\label{sect Shi's derivative estimate under crf}

In this section we will give a proof of Theorem \ref{thm local curvature derivative est} and  Shi's 
estimates of high order derivative of curvature tensor for CRF (Theorem \ref{thm shi high order estimate}).
The following lemma will be used in the proof which is the CRF analog of
Lemma 14.3 in \cite{CC2} for Ricci flow.
The lemma can be proved by a straightforward modification of the Lemma 14.3 where
the comparison of Christoffel symbol $\Gamma_{ij}^k(g(t))$ with $\Gamma_{ij}^k(g(0))$
requires the bound of $\left \vert \nabla p \right \vert$.

\begin{lemma} \label{lem cutoff func eta}
Let $\left( M^n,  g\left(  t\right), p(t)  \right)  $, $t \in \lbrack0,T]$,
be a solution to CRF (\ref{eq crf g p local riem}) with $R_{g(0)}=2nc$.
Assume that closed ball $\bar{B}_{g\left(  0\right)  }\left( O, r\right)  \subset M$
is compact and that

\[
\left\vert \operatorname*{Rm}\right\vert \leq K, \quad \max_{i=0,1} \left \vert \nabla^i p \right
\vert \leq \tilde{K}  \quad \text{on }B_{g\left( 0\right)  }\left(O,r\right)
\times\lbrack0,T_*],
\]
where $T_* \leq \min \{T, \alpha/K \}$ for some positive constants $\alpha, K, \tilde{K}$.
Note that $|c| \leq n(n-1)K$.
Let
\[
\Theta\left(  x,t\right)
= tK^{2}\left\vert \nabla\operatorname{Rc}\left(  x,t\right)
\right\vert ^{2}.
\]
Then there exist constants $C_1 =C_1\left(  \alpha,n, \tilde{K}, |c| \right)$,
$C_2=C_2\left(  \alpha, n, \sqrt{K}r, \tilde{K}, |c| \right) $, and a cutoff function
$\eta:M \rightarrow\left[  0,1\right]  $ with support in $B_{g\left(
0\right)  }\left( O,r\right)$ such that for $(x,t) \in B_{g(0)}\left(O,r\right)  \times
\lbrack0, T_*]$ we have
\begin{align}
& \eta =1  \quad \text{ on } B_{g\left(  0\right)}\left( O,r/2\right)  \notag   \\
& \left\vert \nabla\eta\left(  x\right)  \right\vert _{g\left(  t\right)  }^{2}
  \leq\frac{C_1   }{r^{2}}\eta\left(  x\right)
,\label{grad eta estima0}\\
 &   -\Delta_{g\left(  t\right)  }\eta\left(  x \right)  \leq\frac{C_2  }{r^{2}}+\frac{C_1}
 {K^{3/2}r}\sup_{s\in\lbrack0,t]}\left(  \eta\Theta\right)  ^{1/2}\left(
x,s\right)  . \label{lapl eta estim00}
\end{align}
\end{lemma}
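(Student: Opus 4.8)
The plan is to follow the construction of the cutoff function in \cite[Lemma 14.3]{CC2} for Ricci flow, carrying along the two extra terms in the CRF evolution $\partial_t g_{ij}=-2R_{ij}+4cg_{ij}-2pg_{ij}$ (from \eqref{eq crf g p local riem}) and tracking where the bound on $|\nabla p|$ enters. All distances and the cutoff are built from the fixed initial metric $g(0)$. First I would set $\gamma(x)=d_{g(0)}(O,x)$ and define $\eta=\phi(\gamma)$, where $\phi:[0,\infty)\to[0,1]$ is a fixed function with $\phi\equiv 1$ on $[0,r/2]$, $\operatorname{supp}\phi\subset[0,r)$, $|\phi'|\leq C_0/r$, $|\phi''|\leq C_0/r^2$, and $(\phi')^2\leq (C_0/r^2)\phi$ (the last property, arranged in the usual way, is what carries the factor $\eta$ in \eqref{grad eta estima0}). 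Since $\gamma$ is not smooth on the cut locus of $O$, the two differential inequalities are to be read in the barrier (support) sense via Calabi's trick, which is all that the maximum-principle argument in the proof of Theorem \ref{thm local curvature derivative est} requires.

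The key preliminary is a two-sided metric comparison on $[0,T_*]$. From the evolution equation, $|\partial_t g|_{g}$ is controlled by $2(n-1)K+4|c|+2\tilde K$; integrating over $[0,T_*]$ and using $T_*\leq\alpha/K$, $|c|\leq n(n-1)K$ and $K\geq 1$ yields $\Lambda^{-1}g(0)\leq g(t)\leq \Lambda g(0)$ with $\Lambda=\Lambda(\alpha,n,\tilde K,|c|)$. The gradient estimate \eqref{grad eta estima0} then follows at once: $|\nabla\eta|^2_{g(t)}=(\phi')^2|\nabla\gamma|^2_{g(t)}\leq\Lambda(\phi')^2|\nabla\gamma|^2_{g(0)}=\Lambda(\phi')^2\leq \Lambda (C_0/r^2)\,\eta$, giving $C_1=C_1(\alpha,n,\tilde K,|c|)$.

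For \eqref{lapl eta estim00} I would compute $-\Delta_{g(t)}\eta=-\phi''|\nabla\gamma|^2_{g(t)}-\phi'\Delta_{g(t)}\gamma$ and handle the three sources of terms separately. The $\phi''$ term is bounded by $C\Lambda/r^2$. For the Laplacian term I would write $\Delta_{g(t)}\gamma = g(t)^{ij}\operatorname{Hess}_{g(0)}(\gamma)_{ij}-g(t)^{ij}\bigl(\Gamma^k_{ij}(g(t))-\Gamma^k_{ij}(g(0))\bigr)\partial_k\gamma$. The Hessian comparison theorem for $g(0)$ (valid since $|\operatorname{Rm}_{g(0)}|\leq K$ forces $\sec_{g(0)}\geq -K$) gives $\operatorname{Hess}_{g(0)}\gamma\leq \sqrt K\coth(\sqrt K\gamma)\,g(0)$ on $\operatorname{supp}\phi'\subset\{\gamma\geq r/2\}$; contracting against $g(t)^{ij}$, using the metric comparison, and multiplying by $|\phi'|\leq C_0/r$ together with the elementary bound $\sqrt K\coth(\sqrt K r/2)\leq \tfrac{C}{r}(\sqrt Kr)\coth(\sqrt Kr/2)$ produces a contribution of the form $C_2/r^2$ with $C_2=C_2(\alpha,n,\sqrt Kr,\tilde K,|c|)$.

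The remaining Christoffel-difference term is where CRF differs from Ricci flow and where $|\nabla p|\leq\tilde K$ is used. Writing $\Gamma^k_{ij}(g(t))-\Gamma^k_{ij}(g(0))=\int_0^t\partial_s\Gamma^k_{ij}\,ds$ with $\partial_s\Gamma^k_{ij}=\tfrac12 g^{kl}(\nabla_i\dot g_{jl}+\nabla_j\dot g_{il}-\nabla_l\dot g_{ij})$ and $\dot g_{ij}=-2R_{ij}+(4c-2p)g_{ij}$, the derivative $\nabla\dot g$ contributes a term proportional to $\nabla\operatorname{Rc}$ and a term proportional to $\nabla p$ (the constant $c$ drops out since $\nabla g=0$). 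For the $\nabla\operatorname{Rc}$ part I would estimate $\int_0^t|\nabla\operatorname{Rc}(s)|\,ds\leq 2\sqrt t\,\sup_{s}(\sqrt s|\nabla\operatorname{Rc}|)=\tfrac{2\sqrt t}{K}\sup_s\Theta^{1/2}$ and then, with $\sqrt t\leq\sqrt{\alpha/K}$ and the factor $|\phi'|\leq \tfrac{\sqrt{C_0}}{r}\eta^{1/2}$, obtain exactly $\tfrac{C_1}{K^{3/2}r}\sup_{s\in[0,t]}(\eta\Theta)^{1/2}$. For the $\nabla p$ part, $\int_0^t|\nabla p|\,ds\leq\tilde Kt\leq \tilde K\alpha/K$, so the factor $|\phi'|\leq C_0/r$ gives $\leq \tfrac{C\tilde K\alpha}{Kr}$; using $\tfrac1{Kr}=\tfrac{r}{Kr^2}$ and $\tfrac rK=\tfrac{\sqrt Kr}{K^{3/2}}\leq\sqrt Kr$ (as $K\geq1$), this is $\leq \tfrac{C\tilde K\alpha(\sqrt Kr)}{r^2}$, which is absorbed into $C_2/r^2$. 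Summing the three contributions yields \eqref{lapl eta estim00}. I expect the main obstacle to be this clean separation of the Christoffel-difference term into the $\Theta$-part and the absorbed $\nabla p$-part with the precise powers of $K$ — so the $\Theta$-term appears with exactly the weight $K^{-3/2}r^{-1}$ while the $\nabla p$-term lands in $C_2/r^2$ — together with the standard but delicate point of justifying the inequalities across the cut locus of $O$ in the support sense.
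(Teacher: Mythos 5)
Your proposal is correct and follows exactly the route the paper indicates: the paper's proof of Lemma \ref{lem cutoff func eta} is precisely ``a straightforward modification of \cite[Lemma 14.3]{CC2}'' in which the Christoffel-symbol comparison $\Gamma_{ij}^k(g(t))-\Gamma_{ij}^k(g(0))$ is where the bound $|\nabla p|\leq\tilde K$ enters, and your splitting of that term into the $\nabla\operatorname{Rc}$ contribution (yielding the $\tfrac{C_1}{K^{3/2}r}\sup(\eta\Theta)^{1/2}$ term) and the $\nabla p$ contribution (absorbed into $C_2/r^2$) is the intended adaptation. Your bookkeeping of the constants and of the metric equivalence $\Lambda^{-1}g(0)\leq g(t)\leq\Lambda g(0)$ matches the stated dependencies, so nothing further is needed.
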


\vskip .2cm
\noindent {\it Proof of Theorem \ref{thm local curvature derivative est}}. \
We will use the formula of $\partial_t \operatorname{Rm}$ in \cite[p.417]{LQZ} 
 to compute 
\begin{equation*}
\partial_t \left \vert \operatorname{Rm} \right \vert^2
=\frac{\partial}{\partial t} \left  (g^{ri}g^{sj}g^{pk}g^{ql} R_{rspq}R_{ijkl} \right ).
\end{equation*}
Note that the terms, which contain factor $\operatorname{Rc}-2cg$ and arise
when we differentiate $g^{-1}$,
cancel the corresponding terms which arise when we differentiate $R_{ijkl}$.
We get the evolution equation for the norm of the curvature (compare \cite[p.225]{CK})
\begin{align}
\left ( \partial_t -\Delta  \right )\left \vert \operatorname{Rm} \right \vert^2
\leq &  - 2 \left  \vert \nabla \operatorname{Rm} \right \vert^2
+16 \left \vert \operatorname{Rm} \right \vert ^3
 +4(|p| + 2|c| ) \left \vert \operatorname{Rm} \right \vert^2 \label{eq evol nabla Rm}  \\
&   + 8 \left \vert \operatorname{Rm}
 \right \vert \left \vert \nabla^2 p \right \vert. \notag
\end{align}
Actually $\left \vert \operatorname{Rm}
 \right \vert \left \vert \nabla^2 p \right \vert$ term can be written as
 $\left \vert \operatorname{Rc} \right \vert \left \vert \nabla^2 p \right \vert$.
 \\

It follows by a standard computation using the formula of $\partial_t \operatorname{Rm}$ in 
\cite[p.417]{LQZ} 
that the covariant derivative $\nabla \operatorname{Rm}$  satisfies (compare \cite[p.227]{CK})
\begin{align*}
 \left (  \partial_t - \Delta \right )\nabla \operatorname{Rm}
= & 40 \operatorname{Rm} * \nabla \operatorname{Rm} + 8 (\operatorname{Rc}- 2cg) * \nabla
\operatorname{Rm} \\
&  -2(p-2c) \nabla \operatorname{Rm}+ 5 \nabla p * \operatorname{Rm}
 + 4 g* \nabla^3 p. 
\end{align*}
Here if $A$ and $B$ are tensors, $A*B$ means some contraction of the tensor product $A \otimes B$.
If $k$ are natural number, then $k A*B$ denotes a tensor consisting of $k$ terms of $A* B$.
Then it follows that
\begin{align}
& \left (\partial_t  - \Delta \right ) \left \vert \nabla \operatorname{Rm}
\right \vert^2  \label{eq nabla rm norm evol} \\
= &  - 2 \left  \vert \nabla^2  \operatorname{Rm} \right \vert^2
+ 90 \operatorname{Rm}* \nabla \operatorname{Rm} * \nabla \operatorname{Rm}
+ 16(\operatorname{Rc}-2cg) * \nabla \operatorname{Rm} * \nabla \operatorname{Rm}  \notag  \\
& + 14( p-2c) \nabla  \operatorname{Rm} * \nabla \operatorname{Rm}+ 10 \nabla p *  \operatorname{Rm}
* \nabla  \operatorname{Rm} + 8 \nabla  \operatorname{Rm} * \nabla^3 p . \notag
\end{align}
Actually term $\nabla  \operatorname{Rm} * \nabla^3  p$  can be written as
$\nabla  \operatorname{Rc} * \nabla^3 p$.
\\

Below $C(n)$ is a constant depending only on dimension $n$.
Using (\ref{eq evol nabla Rm}) and (\ref{eq nabla rm norm evol}) we compute
\begin{align*}
& \left (\partial_t  - \Delta \right )  \left ( \left(  16K^{2}
+\left\vert \operatorname*{Rm}\right\vert^{2}\right)  \left\vert \nabla
\operatorname*{Rm}\right\vert ^{2} \right ) \\
= &  \left ( \partial_t  - \Delta \right ) \left \vert \operatorname*{Rm} \right
\vert^{2} \cdot \left \vert \nabla \operatorname*{Rm} \right \vert ^{2}
+ \left(  16K^{2} +\left\vert \operatorname*{Rm}\right\vert^{2}\right) \cdot \left (
 \frac{\partial}{\partial t} - \Delta \right )   \left \vert \nabla \operatorname*{Rm}
\right \vert ^{2}   \\
& -2 \nabla \left\vert \operatorname*{Rm}\right\vert^{2} \cdot \nabla  \left \vert \nabla
\operatorname*{Rm}\right\vert ^{2}  \\
\leq & -2 \left  \vert \nabla \operatorname{Rm} \right \vert^4 + 16 \left  \vert
\operatorname{Rm} \right \vert^3 \left \vert \nabla \operatorname*{Rm} \right \vert ^{2}
-2  \left (  16K^{2} +\left\vert \operatorname*{Rm}\right\vert^{2}\right) \left  \vert \nabla^2 
 \operatorname{Rm} \right \vert^2 \\
& + C(n) \left (  16K^{2} +\left\vert \operatorname*{Rm}\right\vert^{2}\right) \left\vert
 \operatorname*{Rm}\right\vert \left\vert \nabla \operatorname*{Rm}\right\vert^{2}
+ 8 \left \vert  \operatorname*{Rm} \right \vert  \left\vert  \nabla \operatorname*{Rm}
\right\vert^{2}  \left \vert \nabla^2 \operatorname*{Rm}\right\vert \\
&+ C(n) |c| \left ( 16K^2 + \left \vert \operatorname*{Rm}\right\vert^2 \right )
\left \vert \nabla \operatorname*{Rm}
\right\vert^2  + C(n) |p| \left ( 16K^2 + \left \vert \operatorname*{Rm}\right \vert^2 \right )
\left \vert \nabla \operatorname*{Rm} \right \vert^2 \\
&+ C(n) \left \vert \nabla p \right \vert \left ( 16K^2 + \left \vert \operatorname*{Rm}\right
\vert^2 \right ) \left \vert \operatorname*{Rm} \right \vert \left \vert \nabla \operatorname*{Rm}
\right \vert +C(n) \left \vert \nabla^2 p \right \vert  \left \vert  \operatorname*{Rm}
\right \vert \left \vert \nabla \operatorname*{Rm} \right \vert^2 \\
& + C(n) \left \vert \nabla^3 p \right \vert \left ( 16K^2 + \left \vert \operatorname*{Rm}\right
\vert^2 \right ) \left \vert \nabla \operatorname*{Rm} \right \vert.
\end{align*}
Using the assumption $\left \vert \operatorname*{Rm} \right \vert \leq K$ with $K \geq 1$ and
 $\max_{i=0,1,2,3}  | \nabla^i p| \leq \tilde{K} $ on $ \bar{B}_{g
\left(  0\right) }\left( O,r\right) \times \lbrack0,T_*]$
we get that on  $ \bar{B}_{g \left(  0\right) }\left( O,r\right) \times \lbrack0,T_*]$
\begin{align*}
& \left (\partial_t  - \Delta \right )  \left ( \left(  16K^{2}
+\left\vert \operatorname*{Rm}\right\vert^{2}\right)  \left\vert \nabla
\operatorname*{Rm}\right\vert ^{2} \right ) \\
\leq &  -2 \left \vert \nabla \operatorname{Rm} \right \vert^4 + C(n, \tilde{K}, |c|) K^3
\left \vert \nabla \operatorname{Rm} \right \vert^2 -32 K^2 \left \vert \nabla^2
\operatorname{Rm} \right \vert^2 + C(n, \tilde{K}) K^6  \\
& + 8 K \left \vert \nabla \operatorname{Rm} \right \vert^2
\left \vert \nabla^2 \operatorname{Rm} \right \vert.
\end{align*}
Since
\begin{align*}
&-\frac{1}{2} \left \vert \nabla \operatorname{Rm} \right \vert^4 +
8 K \left \vert \nabla \operatorname{Rm} \right \vert^2
\left \vert \nabla^2 \operatorname{Rm} \right \vert -32 K^2 \left \vert \nabla^2
\operatorname{Rm} \right \vert^2 \leq 0, \\
& -\frac{1}{2} \left \vert \nabla \operatorname{Rm} \right \vert^4 + C(n, \tilde{K}, |c|) K^3
\left \vert \nabla \operatorname{Rm} \right \vert^2  \leq \tilde{C}(n, \tilde{K}, |c|) K^6,
\end{align*}
we have established
\begin{align}
\left ( \partial_t  - \Delta \right )  \left ( \left(  16K^{2}
+\left\vert \operatorname*{Rm}\right\vert^{2}\right)  \left\vert \nabla
\operatorname*{Rm}\right\vert ^{2} \right ) \leq -\left \vert \nabla \operatorname{Rm} \right \vert^4
+ C(n, \tilde{K}, |c|) K^6. \label{eq evol of Bernstein est quantity}
\end{align}

Inequality (\ref{eq evol of Bernstein est quantity}) is of the same form as the inequality on
the top of \cite[p.238]{CC2}. The remaining proof of estimate (\ref{eq est local first order Shi})
can be finished by the same argument as the proof given in \cite[pp.238--239]{CC2} for the Ricci
flow. Roughly speaking this is done in two steps. First we localize the inequality
(\ref{eq evol of Bernstein est quantity}) by multiplying $\left(  16K^{2}
+\left\vert \operatorname*{Rm}\right\vert^{2} \right ) \left\vert \nabla
\operatorname*{Rm}\right\vert ^{2} $ by $t \eta$
where $\eta$ is the cutoff function given in Lemma \ref{lem cutoff func eta}.
Then we apply the parabolic maximum principle to the resulting inequality of the localized quantity.
We omit the details.
\hfill $\square$

\vskip .3cm
Now we turn to Shi's high order derivative estimates. 
First we compute the evolution equation of high derivatives of the curvature tensor,
here we follow closely the calculation for Ricci flow (see \cite[p.228]{CK},
for example).
\begin{align*}
\partial_t  \nabla^k \operatorname{Rm} = &  \nabla^k \left (\partial_t 
\operatorname{Rm} \right ) + \sum_{j=0}^{k-1} \nabla^j \left ( \nabla(
\operatorname{Rc} + (p-2c)) * \nabla^{k-j-1}  \operatorname{Rm} \right )  \\
=&  \nabla^k ( \Delta \operatorname{Rm} + \operatorname{Rm} * \operatorname{Rm} +
(\operatorname{Rc}-2cg)* \operatorname{Rm} -2(p-2c)\operatorname{Rm} +g*\nabla \nabla p )\\
& + \sum_{j=0}^{k-1} \nabla^j \left ( \nabla \operatorname{Rc} * \nabla^{k-j-1}
\operatorname{Rm} \right ) + \sum_{j=0}^{k-1} \nabla^j \left ( \nabla p *
\nabla^{k-j-1}  \operatorname{Rm} \right ) \\
 = & \nabla^k \Delta \operatorname{Rm}  + \sum_{j=0}^k \nabla^j \operatorname{Rm} *
  \nabla^{k-j} \operatorname{Rm} + c g* \nabla^k \operatorname{Rm} \\
 &  + \sum_{j=0}^k  \nabla^j p* \nabla^{k-j} \operatorname{Rm}+ g* \nabla^{k+2} p .
\end{align*}
Since for any tensor $A$ we have that the commutator
\[
[\nabla^k, \Delta] A = \sum_{j=0}^k \nabla^j \operatorname{Rm} * \nabla^{k-j} A,
\]
we conclude

\begin{align}
& \left ( \partial_t  - \Delta \right )  \nabla^k \operatorname{Rm}
\label{eq nabla k curv evol}\\
=& \sum_{j=0}^k \nabla^j \operatorname{Rm} * \nabla^{k-j} \operatorname{Rm} + c
g* \nabla^k \operatorname{Rm} + \sum_{j=0}^k \nabla^j p* \nabla^{k-j} \operatorname{Rm}
 + g* \nabla^{k+2} p . \notag
\end{align}

Note that
\[
\partial_t  \left \vert \nabla^k \operatorname{Rm} \right \vert^2  =
2 \left \langle \partial_t  \left ( \nabla^k \operatorname{Rm} \right ),
\nabla^k \operatorname{Rm} \right \rangle
+ (\operatorname{Rc} +(p-2c))*\nabla^k \operatorname{Rm}*\nabla^k \operatorname{Rm}.
\]
Using the following equality for tensors
\[
2 \langle \Delta A, A \rangle = \Delta |A|^2 - 2|\nabla A|^2,
\]
we get
\begin{align*}
& \left (\partial_t  - \Delta\right ) \left \vert \nabla^k \operatorname{Rm}
\right \vert^2 \\
= & - 2 \left  \vert \nabla^{k+1} \operatorname{Rm}
\right \vert^2 + \sum_{j=0}^k \nabla^j \operatorname{Rm}* \nabla^{k-j} \operatorname{Rm} 
* \nabla^k \operatorname{Rm} + c \nabla^k \operatorname{Rm} * \nabla^k \operatorname{Rm} \\
& +  \sum_{j=0}^k \nabla^j p
* \nabla^{k-j}  \operatorname{Rm} * \nabla^k  \operatorname{Rm}
+  \nabla^{k+2} p * \nabla^k \operatorname{Rm}.
\end{align*}

Using the above evolution equation we can prove the following local estimate of the high order 
derivative of curvature tensors for CRF by applying maximum principle to the evolution 
inequality of  the localized quantity
\[
\eta \left ( C +t^n \vert \nabla^n \operatorname{Rm} \vert^2 \right ) t^{n+1}
\vert \nabla^{n+1} \operatorname{Rm} \vert^2.
\]
We omit the detail of the proof.

\begin{theorem} \label{thm shi high order estimate}
(i) There exists a constant $C_1 =C_1 (
\alpha,n, m, r,K ,\tilde{K} )$ depending only on $\alpha, n, m$, $r$, $K$,
and $\tilde{K}$, such that the following property holds.
Let $(M^{n}, g(t)$, $p(t))$,  $t\in\left[
0,T \right] $, be a solution to CRF (\ref{eq crf g p local riem}) with $R_{g(0)}=2nc$.
Assume that closed ball $\bar{B}_{g\left(  0\right)
}\left( O,r\right) \subset M $ is compact and that
\begin{align}
&\left\vert \operatorname*{Rm}  \right\vert \leq K \text{ on  } \bar{B}_{g\left(
 0\right) }\left( O,r\right) \times \lbrack0, T_*]
\label{eq curv bdd assump 3}  \\
& \max_{i=0,1,\cdots, m+2}  | \nabla^i p| \leq \tilde{K} \text{ on  }
\bar{B}_{g\left(  0\right)
}\left( O,r\right) \times \lbrack0, T_*] \label{eq p deriv bdd assump 3}
\end{align}
where $ T_* \leq \min \{T, \alpha/K \}$, then we have
\begin{equation}
\left\vert \nabla^m \operatorname*{Rm}\left(  x,t\right)  \right\vert \leq
\frac{C_1 }{t^{m/2}} \label{eq est local high order Shi}
\end{equation}
for all $\left(  x,t\right)  \in B_{g\left(  0\right)  }\left(
O, r/2\right)  \times(0,T_*]$.

(ii) If  $(M^n,g(t), p(t)), \, t \in [0,T]$, in (i) is a complete solution to CRF. Suppose assumption
(\ref{eq curv bdd assump 3}) and (\ref{eq p deriv bdd assump 3}) holds on $M \times [0,T_*]$,
then there is a constant $C_2 =C_2(\alpha, n, m,\tilde{K}, |c|)$ depending only on $\alpha, n, m,
\tilde{K}$, and $|c|$, such that
\begin{equation}
\left\vert \nabla^m \operatorname*{Rm}\left(  x,t\right)  \right\vert \leq
\frac{C_2 K}{t^{m/2}} \label{eq est high order Shi 3}
\end{equation}
for all $\left(  x,t\right)  \in M \times(0,T_*]$.
\end{theorem}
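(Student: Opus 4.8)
Theorem \ref{thm shi high order estimate} is a local Shi-type estimate for all higher covariant derivatives of the curvature tensor along conformal Ricci flow, with the key feature that the pressure function $p$ enters only through the assumed bounds $\max_{i} |\nabla^i p| \le \tilde{K}$ rather than through its own evolution equation.

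Let me sketch a proof plan.

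The plan is to prove the higher-order estimate \eqref{eq est local high order Shi} by induction on $m$, using the evolution equation \eqref{eq nabla k curv evol} for $\nabla^k\operatorname{Rm}$ (equivalently the evolution of $|\nabla^k\operatorname{Rm}|^2$ derived immediately after it) together with a Bernstein-type localization and the parabolic maximum principle, exactly as in the first-order case \eqref{eq est first order Shi} already proved in Theorem \ref{thm local curvature derivative est}. The base case $m=1$ is Theorem \ref{thm local curvature derivative est}. Assume inductively that for each $1 \le j \le m-1$ we already have a local bound of the form $|\nabla^j\operatorname{Rm}| \le C_j K / t^{j/2}$ on successively smaller balls $B_{g(0)}(O, r_j)$. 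The goal at step $m$ is to control $|\nabla^m\operatorname{Rm}|$ on a ball slightly smaller than the one at step $m-1$.

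The technical heart of the argument is to estimate $(\partial_t - \Delta)$ applied to the localized quantity
\[
F = \eta\,\bigl(A + t^{m-1}|\nabla^{m-1}\operatorname{Rm}|^2\bigr)\,t^m |\nabla^m\operatorname{Rm}|^2,
\]
with $A$ a large constant multiple of $K^2$ and $\eta$ the cutoff function from Lemma \ref{lem cutoff func eta}. First I would apply the operator $(\partial_t-\Delta)$ to the combination $\bigl(A + t^{m-1}|\nabla^{m-1}\operatorname{Rm}|^2\bigr)\,t^m|\nabla^m\operatorname{Rm}|^2$, substituting the evolution of $|\nabla^{m-1}\operatorname{Rm}|^2$ and $|\nabla^m\operatorname{Rm}|^2$ from the displayed identity after \eqref{eq nabla k curv evol}. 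The crucial cancellation is that the good negative term $-2 A\, t^m |\nabla^{m+1}\operatorname{Rm}|^2$ coming from the $-2|\nabla^{m+1}\operatorname{Rm}|^2$ in the evolution of $|\nabla^m\operatorname{Rm}|^2$ dominates, via Cauchy–Schwarz and Young's inequality, the dangerous cross terms of the form $|\nabla^{m-1}\operatorname{Rm}|^2 \cdot \nabla^{m+1}\operatorname{Rm}\ast\nabla^{m-1}\operatorname{Rm}$ and $t^m|\nabla^m\operatorname{Rm}|^2|\nabla^{m+1}\operatorname{Rm}|$ that arise when differentiating the second factor. All the zeroth-through-$(m-1)$-order curvature factors appearing in these products are controlled by the inductive hypothesis, and all pressure factors $\nabla^i p$ with $i \le m+2$ are bounded by the assumption \eqref{eq p deriv bdd assump 3} (this is precisely why the hypothesis requires derivatives of $p$ up to order $m+2$: the evolution of $|\nabla^m\operatorname{Rm}|^2$ contains a $\nabla^{m+2}p$ term). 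After absorbing, one reaches a differential inequality of the schematic form $(\partial_t-\Delta)F \le -c\,\eta\, t^m |\nabla^m\operatorname{Rm}|^4 + (\text{bounded terms})$, analogous to \eqref{eq evol of Bernstein est quantity}.

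The main obstacle, and the step requiring the most care, is the bookkeeping of these mixed terms: one must verify that after localization the gradient terms $\nabla\eta\cdot\nabla F$ and $-\Delta\eta$ (estimated via \eqref{grad eta estima0} and \eqref{lapl eta estim00}) can also be absorbed into the negative $|\nabla^m\operatorname{Rm}|^4$ term, and that the bad term in \eqref{lapl eta estim00} involving $\sup(\eta\Theta)^{1/2}$ does not destroy the estimate — this is handled exactly as in the Ricci flow case by the argument in \cite[pp.\,238--239]{CC2}, which yields a self-improving bound via the maximum principle. Finally, applying the parabolic maximum principle to $F$ on $\bar B_{g(0)}(O,r)\times[0,T_*]$ gives a pointwise bound on $F$, and evaluating on $B_{g(0)}(O,r/2)$ where $\eta\equiv 1$ yields $t^m|\nabla^m\operatorname{Rm}|^2 \le C$, hence \eqref{eq est local high order Shi}. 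Part (ii), the global estimate \eqref{eq est high order Shi 3} on a complete solution, follows by letting $r\to\infty$ with the bounds now assumed on all of $M\times[0,T_*]$, so that the constant from \eqref{grad eta estima0}--\eqref{lapl eta estim00} contributes nothing and the constant depends only on $\alpha, n, m, \tilde{K}, |c|$; the explicit factor $K$ in \eqref{eq est high order Shi 3} comes from tracking the scaling of the curvature bound through the estimate. Since all of this is a direct transcription of the Ricci flow computation with the extra pressure terms uniformly bounded by hypothesis, I would record the evolution equation \eqref{eq nabla k curv evol} and the structure of the localized quantity, and then refer to \cite{CK, CC2} for the routine maximum-principle details.
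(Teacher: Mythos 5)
Your proposal takes essentially the same route as the paper: the paper likewise derives the evolution equation \eqref{eq nabla k curv evol} for $\nabla^k\operatorname{Rm}$ and of $\vert\nabla^k\operatorname{Rm}\vert^2$, and then applies the parabolic maximum principle inductively to the localized quantity $\eta\left(C+t^{k}\vert\nabla^{k}\operatorname{Rm}\vert^{2}\right)t^{k+1}\vert\nabla^{k+1}\operatorname{Rm}\vert^{2}$ --- identical to your $F$ with $k=m-1$ --- using the cutoff function of Lemma \ref{lem cutoff func eta}, the assumed bounds on $\nabla^i p$ for $i\le m+2$ (needed exactly because of the $\nabla^{k+2}p\ast\nabla^k\operatorname{Rm}$ term, as you note), and the argument of \cite{CC2} for the remaining maximum-principle details, which the paper omits. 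Your handling of part (ii) by letting $r\to\infty$ in the local estimate is also the standard step the paper implicitly relies on, so the proposal is correct and matches the paper's (itself only sketched) proof.
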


\begin{remark}
(i) In Theorem \ref{thm shi high order estimate}(i)
 if we further assume $\left\vert \nabla^k \operatorname*{Rm}\left(  x, 0 \right)
\right\vert  \leq K$ for $k=1, \cdots, l$ and $x\in \bar{B}_{g\left(  0\right) }\left( O,
r\right)$, then we have $\left\vert \nabla^m
\operatorname*{Rm}\left(  x,t\right)  \right\vert \leq \frac{C_3 }{t^{(m-l)/2}}$ 
for all $\left( x,t\right)  \in B_{g\left(  0\right)  }\left(
O,r/2\right)  \times(0,T_*]$. Here $C_3=C_3(\alpha,n, m,r, K,
\tilde{K})$ is a constant. A similar generalization of Theorem
\ref{thm shi high order estimate}(ii) also holds. This is the analog of the so-called 
modified Shi's local derivative estimates in Ricci flow (\cite[Theorem 14.16]{CC2}).

(ii) Using Theorem \ref{thm shi high order estimate} we can prove a compactness 
theorem for CRF.  Let $\left\{  ({M}_{k}^{n},g_{k}(t),p_{k}(t), O_k)\right\}$,
$t\in  ( -\alpha, \beta)$ with $\alpha, \beta >0$, be a sequence of pointed complete
solutions of CRF with constant scalar curvature.
Assume that for some constants $m \in \mathbb{N}$, $K$, and $\tilde{K}$ 
\begin{align}
& \vert \operatorname{Rm}_{g_k} \vert \leq K \quad \text{ on }  M_k \times (-\alpha, \beta),
\label{eq curv bdd assump 4}  \\
& \max_{i=0,1,\cdots, m+2} | \nabla^i p| \leq \tilde{K} \quad \text{ on
}  M_k \times (-\alpha, \beta). \label{eq p deriv bdd assump 4}
\end{align}
Further assume that the injectivity radius $\operatorname{inj}_{g_{k}
(0)}(O_{k}) \geq \delta$ for some $\delta>0$.
With the aid of Theorem \ref{thm shi high order estimate}(ii) we may apply the
Cheeger--Gromov compactness theorem to the sequence and conclude the following.
There exists a subsequence of $\left\{  (M_{k},g_{k}(t),p_{k}(t), O_k)\right\} $,
$t \in ( -\alpha, \beta)$, which converges in
the pointed $C^{m +1}$-Cheeger--Gromov topology to a pointed complete
solution of CRF $(M_{\infty}^{n},g_{\infty}
(t),p_{\infty}(t), O_\infty )$, $t\in ( -\alpha, \beta)$.
\end{remark}

%%%%%%%%%%%%%%%%%%%%%%%%%%%%%%%%%%%%%%%%%%%%%%%%%%

\bibliographystyle{natbib}

\begin{thebibliography}{AAAA}
\bibitem[Al07]{Al07}{\sc P. Albin}, {\em A renormalized index theorem for some complete asymptotically 
regular metrics: the Gauss–Bonnet theorem}. Adv. Math. \textbf{213} (2007), 1--52.

\bibitem[ACF]{ACF} {\sc L. Andersson, P.T. Chruściel, and H. Friedrich}, {\em On the regularity of solutions 
to the Yamabe equation and the existence of smooth hyperboloidal initial data for Einstein's field equations}.
Comm. Math. Phys.  \textbf{149}  (1992), 587--612.

\bibitem[Ba11]{Ba11} {\sc E. Baquaud}, {\em Ricci flow of conformally compact metrics}.
Ann. I. H. Poincar\'{e} -- AN \textbf{28} (2011), 813--835.

\bibitem[Be87]{Be87} A. L. Besse, {\em Einstein Manifolds}. Springer-Verlag, Berlin, 1987.

\bibitem[CC2]{CC2} {\sc B. Chow, S.C. Chu, D. Glickenstein, C. Guenther,
J. Isenberg, T. Ivey, D. Knopf, P. Lu, F. Luo,  and L. Ni},
{\em The Ricci Flow: Techniques and Applications. Part II.
Analytic Aspects}. Math. Surv. Monog.  \textbf{144}.
 AMS, 2008.
 
\bibitem[CK]{CK} {\sc B. Chow and D. Knopf}, {\em The Ricci flow: An
introduction}. Math. Surv. and Monog. \textbf{110}. AMS, 2004.

\bibitem[DM]{DM} {\sc X. Dai and L. Ma}, {\em Mass Under the Ricci Flow}.
Comm. Math. Phys.  \textbf{274} (2007), 65--80.

\bibitem[Fi04]{Fi04} {\sc A. Fischer},
{\em An introduction to conformal Ricci flow}.
Class. Quantum Grav.  \textbf{21} (2004), S171--S218.

\bibitem[GKP]{GKP} {\sc S.S. Gubser, I.R. Klebanov, and A.M. Polyakov}, {\em Gauge theory
 correlators from non-critical string theory}
(English summary). Phys. Lett. B \textbf{428} (1998), 105--114. 

\bibitem[Le95]{Le95} {\sc J. M. Lee}, {\em The spectrum of an asymptotically hyperbolic Einstein manifold}.
 Comm. Anal. Geom. \textbf{3} (1995), 253--271.

\bibitem[Le06]{Le06} {\sc J.M. Lee}, {\em Fredholm operators and Einstein metrics on conformally compact
 manifolds}. Mem. Amer. Math. Soc. \textbf{183}. AMS, (2006).

\bibitem[LQZ]{LQZ} {\sc P. Lu, J. Qing, and Y. Zheng}, {\em A note on the conformal Ricci flow}.
 Pacific J. Math. \textbf{268} (2014), 413--434.

\bibitem[Ma98]{Ma98} {\sc J. Maldacena}, {\em The large N limit of superconformal field theories and 
supergravity}. Adv. Theor. Math. Phys. \textbf{2} (1998), 231--252.

\bibitem[Mz91]{Mz91} {\sc R. Mazzeo}, {\em Elliptic theory of differential edge operators. I}. 
Comm. Partial Differential Equations \textbf{16} (1991), 1615--1664.  

\bibitem[QSW]{QSW}{\sc J. Qing, Y. Shi, and J. Wu}, {\em Normalized Ricci flows and conformally compact 
Einstein metrics}. Calc. Var. Partial Differential Equations \textbf{46} (2013), 183--211. 

\bibitem[Sh89]{Sh89} {\sc W.-X. Shi}, {\em Ricci deformation of the metric on complete noncompact 
Riemannian manifolds}. J. Diff. Geom. \textbf{30} (1989), 303--394.

\bibitem[Wi98]{Wi98} {\sc E. Witten}, {\em Anti de Sitter space and holography}(English summary).
Adv. Theor. Math. Phys. \textbf{2} (1998),  253--291. 
\end{thebibliography}

\end{document}